\documentclass[12pt]{amsart}

\usepackage{psfrag}
\usepackage{color}
\usepackage{tikz}
\usetikzlibrary{matrix,arrows}
\usepackage{graphicx,graphics}
\usepackage{fullpage,amssymb,amsfonts,amsmath,amstext,amsthm,amscd,verbatim,enumerate}
\usepackage[T1]{fontenc}

\newtheorem{theorem}{Theorem}[section]
\newtheorem{lemma}[theorem]{Lemma}

\newtheorem{example}[theorem]{Example}
\numberwithin{equation}{section}

\theoremstyle{definition}
\newtheorem{definition}[theorem]{Definition}

\def\D{\Delta}

\def\to{\rightarrow }

\def\bar{\overline}

\def\C{\Bbb C }
\def\R{\Bbb R }

\def\N{\mathbb{N}}

\def\ft{\widetilde{f}}
\def\zbar{\overline{z}}

\begin{document}

\title{On locally quasiconformal Teichm\"uller spaces}

\author{Alastair Fletcher}

\address{ AF: Department of Mathematical Sciences, Northern Illinois University,
Dekalb, IL 60115, USA. E-mail address: fletcher@math.niu.edu}

\author{ Zhou Zemin}

\address{  ZZ: School of Mathematics, Renmin
University of China, Beijing, 100872, People's Republic of China.
 E-mail address: zzm@ruc.edu.cn }

\subjclass[2000]{Primary 30C62, Secondary 30C75}

\keywords{ Quasiconformal mapping, Locally Quasiconformal mapping,
generalized Teichm$\ddot{\text{u}}$ller space,
Generalized maximum dilatation.}

\thanks{AF is supported by a grant from the Simons Foundation (\#352034, Alastair Fletcher). ZZ is partially
supported by the National Natural Science Foundation of China (Grant
 11571362).}

\date{}
\maketitle
\newtheorem{Theorem}{Theorem}

\begin{abstract}
We define a universal Teichm\"uller space for locally quasiconformal mappings whose dilatation grows not faster than a certain rate. Paralleling the classical Teichm\"uller theory, we prove results of existence and uniqueness for extremal mappings in the generalized Teichm\"uller class. Further, we analyze the circle maps that arise.
\end{abstract}

\section{Introduction}

Teichm\"uller theory is a major area of research in modern mathematics, bringing together analysis, geometry, topology and dynamics. The Teichm\"uller space of a topological surface parameterizes the set of complex structures that can be equipped on the surface. For example, not all tori are conformally equivalent and the space of complex structures of a genus one surface can be parameterized by the upper half-plane. A fundamental object in Teichm\"uller theory is the universal Teichm\"uller space of the disk, denoted $T(\D)$. Via the Uniformization Theorem, every Teichm\"uller space of a hyperbolic surface is embedded in $T(\D)$, and so it is an important object to understand. We refer to \cite{10,11,18,19} for introductions to Teichm\"uller theory.

There are various ways of modelling points of universal Teichm\"uller space. One can consider equivalence classes of quasiconformal maps $f:\D\to\D$ under the Teichm\"uller equivalence relation or, equivalently via solving the Beltrami differential equation $f_{\zbar} = \mu f_z$, equivalence classes of Beltrami differentials. We recall that Beltrami differentials are elements $\mu \in L^{\infty}(\D)$ with $||\mu ||_{\infty} <1$. Since every quasiconformal map $f:\D\to \D$ extends to a quasisymmetric homeomorphism $\ft :\partial \D \to \partial \D$, points of Teichm\"uller space can also be modelled as quasisymmetric maps of the circle which fix the three points $1,-1,i$.

The defining property of a quasiconformal map $f:\D\to \D$ is that it has uniformly bounded distortion. Every quasiconformal map has a complex dilatation $\mu_f = f_{\zbar} / f_z$ which is defined almost everywhere. The quasiconformality condition implies that there exists $0\leq k<1$ such that $||\mu_f||_{\infty} \leq k$ almost everywhere. Solving the Beltrami equation provides the converse to this statement. Recently, there has been interest in the consequences of allowing $||\mu||_{\infty}=1$ in the Beltrami equation and investigating properties of the solutions that occur.

In the literature, various classes of such mappings have been studied, for example David mappings \cite{9,Zakeri}, $\mu$-homeomorphisms \cite{4,5,6,7,8,13,14,17,24} and locally quasiconformal mappings \cite{21,25} have all been studied in dimension two. These all sit in the larger framework of mappings of finite distortion in Euclidean spaces, see for example \cite{12,HK,16}. Such mappings are far from novelties: Petersen and Zakeri used David mappings in \cite{PZ} to study Siegel disks in complex dynamics. Moreover, in the theory of length spectrum Teichm\"uller spaces, it is known that it certain circumstances it differs from the quasiconformal Teichm\"uller space, see for example the work of Shiga \cite{Shiga}. In particular, in this paper a certain map is constructed which has a uniform bound on the distortion of hyperbolic lengths of essential curves, but is (in our language) locally quasiconformal. It is therefore conceivable that locally quasiconformal mappings could play a role in the study of length spectrum Teichm\"uller spaces.

In the current paper, we will work in the setting of locally quasiconformal mappings of the disk and initiate the study of a universal Teichm\"uller space of such mappings. Our aim is to set up a workable definition, solve an extremality problem in this setting and study the boundary mappings that arise.

The paper is organized as follows: we recall some preliminary material on quasiconformal and locally quasiconformal mappings in section 2. In section 3, we define locally quasiconformal Teichm\"uller spaces and state our main results on them. In section 4, we provide proofs of our results. Finally in section 5, we give some concluding remarks indicating directions of future research.

{\bf Acknowledgements:} The second named author would like to thank Professor Chen Jixiu for his many useful suggestions and help.

\section{Preliminaries}

We recall some basic facts about quasiconformal mappings which can be found in many texts, for example \cite{1,10,11,16,18,19}.
Let $U\subset \C$ be a domain. The {\it distortion} of a homeomorphism $f:U\to V\subset \C$ is defined by
\[ D_f(z) = \frac{|f_z(z)| + |f_{\zbar}(z)|}{|f_z(z) -  |f_{\zbar}(z)|}.\]
A homeomorphism $f:U\to V$ is called $K$-quasiconformal if $f$ is absolutely continuous on almost every horizontal and vertical line in $U$ and moreover $\sup_{z\in U} D_f(z) \leq K$. The smallest such $K$ that holds here is called the {\it maximal dilatation} of $f$ and denoted $K_f$. If we do not need to specify the $K$, then we just call the map {\it quasiconformal}. The {\it complex dilatation} of $f$ is $\mu_f = f_{\zbar}/f_z$ and satisfies the equation
\[ D_f(z) = \frac{1+|\mu(z)|}{1-|\mu(z)|}.\]
If $f$ is quasiconformal, then there exists $0\leq k<1$ such that $||\mu_f ||_{\infty} \leq k$ and
\[ K_f = \frac{1+ ||\mu_f||_{\infty}}{1-||\mu_f ||_{\infty} }.\]
Every quasiconformal map $f:\D\to\D$ extends to a homeomorphism of $\overline{\D}$ and, moreover, the boundary map $\widetilde{f} :\partial \D\to \partial \D$ is quasisymmetric, that is, there exists $M\geq 1$ so that
\[ \frac{1}{M} \leq \frac{ | \widetilde{f}(e^{i(\theta +t)}) - \widetilde{f}(e^{i\theta}) |}{ | \widetilde{f}(e^{i\theta}) - \widetilde{f}(e^{i(\theta - t)}) |} \leq M \]
holds for all $\theta \in [0,2\pi)$ and $t>0$.

We now define the class of mappings that will form the basis of our study.

\begin{definition}
\label{def:lqc}
A homeomorphism $f:\D\to \D$ is called {\it locally quasiconformal} if and only if for every compact set $E\subset \D$, $f|_{E}$ is quasiconformal.
\end{definition}

This definition means we allow that distortion of our map to blow up as we head out towards the boundary. Moreover, the complex dilatation of a locally quasiconformal mapping is defined almost everywhere in the disk and we allow $||\mu_f||_{\infty}=1$. However, we have to restrict the types of locally quasiconformal mappings we study.

\begin{example}
\begin{enumerate}[(i)]
\item The map $f(z) = z(1-|z|^2)^{-1}$ from \cite{21} is a locally quasiconformal map from $\D$ onto $\C$. In this paper, we want to consider only locally quasiconformal self-mappings of $\D$.
\item The spiral map $f(re^{i\theta}) = r\exp(i(\theta + \ln \frac{1}{1-r} ) )$ is a locally quasiconformal map $f:\D\to \D$ but it does not extend continuously to the boundary. In this paper, we want to consider locally quasiconformal mappings which extend homeomorphically to the boundary.
\end{enumerate}
\end{example}

\begin{definition}
\label{def:rho}
We say that a continuous increasing function $\rho:[0,1) \to [1,\infty)$ is {\it allowable} if the following conditions hold:
\begin{enumerate}[(i)]
\item $\rho(0)=1$,
\item \hskip 1px \lefteqn{\int_0^1 \rho(r) \: dr < \infty,}
\item for some constant $R>0$ and every $\xi \in \partial \D$,
\[ \lim_{t\to 0^+} \int_t^R \frac{dr}{r\rho^*(\textcolor[rgb]{0.00,0.00,0.00}{|z|})} = +\infty,\]
where $\rho^*(r)$ is defined by
\begin{equation}
\label{eq:rhostar}\
\rho^*(r) = \int_{S(\xi ,r) \cap \D} \rho(|z|) d\theta,
\end{equation}
\textcolor[rgb]{0.00,0.00,0.00}{$z=\xi+r e^{i\theta}$}
and $S(\xi,r)$ is the circle centred at $\xi$ of radius $r$.
\end{enumerate}
\end{definition}

Each allowable $\rho$ yields a family of locally quasiconformal mappings.

\begin{definition}
Suppose $\rho$ is allowable. The family $QC_{\rho}(\D)$ consists of locally quasiconformal mappings $f:\D\to \D$ such that there exists $C>0$ with
\[ D_f(z) \leq C\rho(|z|),\]
for all $z\in \D$.
\end{definition}

Condition (i) in Definition \ref{def:rho} is a convenient normalization condition which implies that $D_{f}(z) / \rho (|z|) \leq 1$ for every conformal map $f:\D\to \D$ with equality at the origin. Condition (ii) implies by \cite[Theorem 1]{21} that if $\mu \in L^{\infty}(\D)$ with $|\mu(z)|<1$ then there exists a locally quasiconformal map $f:\D\to\D$ with complex dilatation $\mu$. Finally, condition (iii) implies by, for example, \cite[Theorem 1]{6} that $f$ extends continuously to $\partial \D$.

Observe that if $f$ is quasiconformal, then $f\in QC_{\rho}(\D)$ for every allowable $\rho$. Typically for a locally quasiconformal mapping, the maximal dilatation is not finite, but there is a maximal dilatation with respect to $\rho$.

\begin{definition}
Let $\rho$ be allowable and let $f\in QC_{\rho}(\D)$. Then the {\it maximal dilatation with respect to $\rho$} is defined by
\[ K^{\rho}_f:= \sup_{z\in \D} \frac{ D_f(z)}{\rho(|z|)}.\]
\end{definition}

As remarked above, condition (i) in Definition \ref{def:rho} implies that $K^{\rho}_{f}=1$ for every conformal map $f$ and every allowable $\rho$.

\begin{example}
By \cite[Theorem 4]{21}, if $\rho(r) = \log \frac{1}{1-r}$, then any locally quasiconformal map $f:\D \to \D$ with $D_f(z) \leq C \rho (|z|)$ extends homeomorphically to $\overline{\D}$. It is a short computation to show that, if $\sigma$ denotes Lebesgue measure, then any such $f$ satisfies
\[ \sigma \{ z\in \D : D_f(z) >K \} < \pi e^{-2K/C}.\]
In particular, this means that any such $f$ is a David mapping. We don't know how $QC_{\rho}(\D)$ and David mappings are related in general.
\end{example}

Since $\rho(0)=1$, we have $K^{\rho}_f \geq 1$, and so $K^{\rho}_f$ gives a quantity the describes how far $f$ is from a conformal map, except that here $K^{\rho}_f = 1$ does not imply that $f$ is conformal. To see this, we only need $D_f(z)$ to grow slower than $\rho(|z|)$.

\section{Locally quasiconformal Teichm\"uller spaces}

In this section we define locally quasiconformal Teichm\"uller spaces with respect to an allowable $\rho$ and state our results.

\begin{definition}
\label{def:lqct}
Let $\rho$ be allowable. Then the set $\mathcal{L}_{\rho}(\D)$ consists of locally quasiconformal mappings $f:\D\to\D$ such that $f^{-1} \in QC_{\rho}(\D)$.
\end{definition}

We need to control the growth of $f^{-1}$ for our results. Note that such a condition on $f$ does not imply the same condition holds for $f^{-1}$, in contrast to the fact that the inverse of a $K$-quasiconformal map is also a $K$-quasiconformal map.

\begin{example}
\label{ex:radial}
Consider radial maps given in polar coodinates by $f_a(re^{i\theta}) = [1-(1-r)^a]e^{i\theta}$ for $a>0$.
A computation shows that
\[ D_f(re^{i\theta}) = \frac{1-(1-r)^a}{ar(1-r)^{a-1}},\]
and the right hand side is the appropriate $\rho$ to consider for such a map. However, condition (ii) in Definition \ref{def:rho} is only satisfied when $0<a<2$. Since $f_a^{-1} = f_{1/a}$ and $f_a\circ f_b = f_{ab}$, we see that condition (ii) is not closed under inverses and compositions.
\end{example}

\begin{lemma}
\label{lem:1}
If $f\in \mathcal{L}_{\rho}(\D)$, then $f$ can be extended homeomorphically to a map $\overline{\D}\to \overline{\D}$.
\end{lemma}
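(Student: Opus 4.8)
The plan is to read off the statement from the boundary behaviour of $f^{-1}$ and then invert. Since $f\in\mathcal{L}_{\rho}(\D)$, by definition $f^{-1}\in QC_{\rho}(\D)$, so $f^{-1}$ is a locally quasiconformal homeomorphism of $\D$ with $D_{f^{-1}}(z)\le C\rho(|z|)$ for some $C>0$, and $\rho$ is allowable. As noted after Definition \ref{def:rho}, condition (iii) of Definition \ref{def:rho} together with \cite[Theorem 1]{6} then provides a continuous extension $g\colon\overline{\D}\to\overline{\D}$ of $f^{-1}$. The first routine step is to check that $g$ maps $\partial\D$ into $\partial\D$: this is just properness of the homeomorphism $f^{-1}\colon\D\to\D$. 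Indeed, if $z_n\to\xi\in\partial\D$ with $z_n\in\D$ and some subsequence of $g(z_n)=f^{-1}(z_n)$ converged to a point $w\in\D$, then applying the continuous map $f$ would force the corresponding subsequence of $z_n$ to converge to $f(w)\in\D$, contradicting $z_n\to\xi$. Hence $g(\overline{\D})$ is compact and contains $\D$, so $g$ is surjective, and $g(\partial\D)=\partial\D$.

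The crux is to show that $g$ is injective on $\partial\D$. Once this is established, $g|_{\partial\D}$ is a continuous injection of a circle onto $\partial\D$ and hence a homeomorphism; then $g\colon\overline{\D}\to\overline{\D}$ is a continuous bijection of a compact space onto a Hausdorff space, hence a homeomorphism, and its inverse is a homeomorphism of $\overline{\D}$ onto itself restricting to $f$ on $\D$. To prove injectivity on $\partial\D$ one must rule out the collapsing of a boundary arc. Suppose $g(\xi_1)=g(\xi_2)=w$ with $\xi_1\neq\xi_2$. Using that $g|_{\D}$ is a homeomorphism and that $g(\partial\D)\subseteq\partial\D$, a topological argument with a crosscut of $\D$ joining $\xi_1$ to $\xi_2$ shows that $g^{-1}(w)\cap\partial\D$ must contain a non-degenerate closed arc $J=[c,d]$; moreover, choosing a crosscut $C$ of $\D$ from $c$ to $d$ and letting $U$ be the Jordan subdomain of $\D$ that it bounds together with $J$, the map $g$ restricts to a homeomorphism of $U$ onto the Jordan domain $\Omega$ bounded by the Jordan curve $g(C)$, with $\overline{\Omega}\cap\partial\D=\{w\}$, while $D_{g}(z)\le C\rho(|z|)$ still holds on $U\subseteq\D$. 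Thus $g$ would crush a whole boundary arc of $U$ onto a single boundary point of the "pinched" domain $\Omega$. Running the length--area and modulus estimates that underlie \cite[Theorem 1]{6} near the arc $J$ --- now with the circle averages $\rho^{*}$ of \eqref{eq:rhostar} taken about points of $J$ --- one finds this is impossible unless $D_{g}$ grows strictly faster near $\partial\D$ than $\rho$ allows, contradicting the divergence condition (iii) of Definition \ref{def:rho}. (Alternatively, one may observe that the proof of \cite[Theorem 1]{6} in fact yields a homeomorphic, not merely continuous, extension of every member of $QC_{\rho}(\D)$; applying this to $f^{-1}$ and inverting gives the lemma at once.)

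I expect the injectivity step --- equivalently, the non-collapsing of a boundary arc --- to be the main obstacle, since it is precisely there that condition (iii) of Definition \ref{def:rho} enters in an essential way, and the technical content is the modulus/capacity estimate forbidding such a collapse. By contrast, the properness argument giving $g(\partial\D)\subseteq\partial\D$, and the passage from a continuous bijection of $\overline{\D}$ to a homeomorphism, are routine.
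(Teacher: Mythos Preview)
Your approach is sound in outline but differs substantially from the paper's, and the injectivity step you flag as the main obstacle is precisely what the paper's argument circumvents entirely.

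The paper does \emph{not} attempt to extend $f^{-1}$ directly and then argue that the extension is injective on $\partial\D$. Instead it uses the following device: since $D_{f^{-1}}(z)\le C\rho(|z|)$ with $\rho$ allowable, one may feed the complex dilatation $\mu_{f^{-1}}$ into the existence theorem \cite[Theorem~1.1]{25}, which produces a locally quasiconformal map $g:\D\to\D$ with $\mu_g=\mu_{f^{-1}}$ that is already known to extend to a homeomorphism of $\overline{\D}$. One then computes that $\mu_{g\circ f}\equiv 0$, so $g\circ f$ is a conformal self-map of $\D$, hence a M\"obius map, which of course extends homeomorphically to $\overline{\D}$. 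Writing $f=g^{-1}\circ(\text{M\"obius})$ gives the homeomorphic extension of $f$ (and of $f^{-1}$) for free.

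The point is that the black box \cite[Theorem~1.1]{25} already contains the homeomorphic boundary behaviour, and the ``same dilatation implies conformal factor'' trick transfers this to $f^{-1}$ without any separate injectivity argument. Your route, by contrast, invokes only the continuous extension coming from \cite[Theorem~1]{6} and then tries to upgrade continuity to injectivity by a bespoke modulus/non-collapsing argument. That argument is plausible and can be made to work, but as written it is only a sketch (you appeal to ``running the length--area estimates'' or to an unstated strengthening of \cite[Theorem~1]{6}). The paper's approach is shorter, avoids this analysis altogether, and makes the role of the existence result in \cite{25} explicit.
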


\begin{proof}
Since $\rho$ is allowable and $f\in\mathcal{L}_{\rho}(\D)$, then $D_{f^{-1}}(z) \leq C \rho(|z|)$, where $\rho$ satisfies the conditions in Definition \ref{def:rho}. It immediately follows from \cite[Theorem 1.1]{25} that $\mu_{f^{-1}}$
can be integrated to give a locally quasiconformal map $g$ which extends to a homeomorphism of $\overline{\D}$.
Now, for $z\in \D$, the complex dilatation of $g\circ f$ satisfies
\[ \mu_{g\circ f}(f^{-1}(z)) = \omega (z) \cdot \frac{ \mu_g(z) - \mu_{f^{-1}}(z) }{1-\overline{\mu_{f^{-1}}(z) }\mu_g(z) } \equiv 0,\]
since $\mu_g = \mu_{f^{-1}}$, and where $|\omega(z)|=1$ for all $z\in \Delta$. Hence $g\circ f$ is a conformal map from $\D$ to itself which extends to the boundary. Hence $f$, and also $f^{-1}$, extend homeomorphically to $\overline{ \D}$.
\end{proof}

Recall that $A(\D)$ is the Bergman space of integrable holomorphic functions on $\D$, that is,
\[ A(\D) = \{ \varphi : \int_{\D} |\varphi| < \infty \} .\]
Complex dilatations $\mu(z) = k\overline{\varphi}/|\varphi|$ for $0<k<1$ and $\varphi \in A(\D)$ are said to be of Teichm\"uller-type
and play an important role in extremal problems in Teichm\"uller theory. We show that an analogue exists for locally quasiconformal mappings.

\begin{lemma}
\label{lem:2}
Let $\varphi_0 \in A(\Delta)$ be not identically zero,
$K_0\geq 1$ be a constant and let $\rho$ be allowable. Then there exists a locally quasiconformal mapping $f_0\in \mathcal{L}_{\rho}(\D)$ with Teichm\"uller-type complex dilatation
\begin{equation}
\label{eq:lem2}
\mu_{f_0}(z) = \frac{\rho(|f_0(z)|)K_0-1}{\rho(|f_0(z)|)K_0+1}\cdot \frac{\overline{\varphi_0 (z)}}{|\varphi_0 (z)|}.
\end{equation}
\end{lemma}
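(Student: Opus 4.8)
The plan is to construct $f_0$ as a fixed point of a suitable operator, mimicking the classical construction of Teichm\"uller mappings but tracking the $\rho$-dependence. The difficulty compared to the classical case is that the formula \eqref{eq:lem2} is \emph{implicit}: the dilatation of $f_0$ at $z$ depends on $|f_0(z)|$, so one cannot simply write down $\mu_{f_0}$ and integrate. First I would fix notation: for a measurable function $g:\D\to[0,1)$ set
\[
\Phi[g](z) = \frac{\rho(g(z))K_0 - 1}{\rho(g(z))K_0 + 1}\cdot \frac{\overline{\varphi_0(z)}}{|\varphi_0(z)|},
\]
so that the Lemma asks for a locally quasiconformal $f_0$ with $\mu_{f_0} = \Phi[\,|f_0|\,]$. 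Note that for any such $g$, the Beltrami coefficient $\Phi[g]$ has modulus $\frac{\rho(g(z))K_0-1}{\rho(g(z))K_0+1} < 1$ pointwise, and its associated distortion is exactly $\rho(g(z))K_0 \leq K_0\rho(|z|)$ provided $g(z) \leq |z|$ — more precisely, provided $g(z)$ stays in a region where $\rho(g(z)) \leq \rho(|z|)$, which needs care since $f_0$ need not map $\D$ inside smaller disks.

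Second, I would set up the iteration. Start with $f_1 = \mathrm{id}$, or better with the solution of the Beltrami equation with coefficient $\Phi[\,|z|\,]$; in general, given $f_n$, let $\mu_{n+1} = \Phi[\,|f_n|\,]$ and let $f_{n+1}$ be the locally quasiconformal self-map of $\D$ with complex dilatation $\mu_{n+1}$ and normalized to fix $1,-1,i$. Existence of each $f_{n+1}$ follows from condition (ii) in Definition \ref{def:rho} via \cite[Theorem 1]{21} (each $\mu_{n+1}$ lies in $L^\infty(\D)$ with $|\mu_{n+1}|<1$), and the uniform estimate $D_{f_{n+1}}(z) \leq K_0\rho(|z|)$ — hence $f_{n+1}^{-1}$ lies in $QC_\rho(\D)$ and $f_{n+1}\in\mathcal L_\rho(\D)$ — will follow once I control $|f_n(z)|$ in terms of $|z|$. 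The key monotonicity input is that $\rho$ is increasing, so if I can show inductively that $|f_n(z)|$ is comparable to (or bounded by a fixed function of) $|z|$, the distortion bound propagates.

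Third, and this is the main obstacle, I would prove convergence of $(f_n)$. The natural approach is to show the sequence is precompact in the topology of locally uniform convergence on $\overline{\D}$ — using that all $f_n$ have distortion bounded by the fixed allowable majorant $K_0\rho$, so by the compactness theory for locally quasiconformal mappings (the normality that underlies \cite[Theorem 1.1]{25} and \cite[Theorem 1]{6}) a subsequence $f_{n_k} \to f_0$ locally uniformly, with $f_0$ again locally quasiconformal, extending to $\overline{\D}$, and with $f_0\in\mathcal L_\rho(\D)$. The delicate point is passing the Beltrami equation to the limit: one needs $\mu_{f_{n_k}} \to \mu_{f_0}$ in a sense strong enough that $\mu_{f_0} = \Phi[\,|f_0|\,]$. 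Here I would exploit the rigid structure of Teichm\"uller-type coefficients: the \emph{argument} of $\mu_{n_k}$ is always $\overline{\varphi_0}/|\varphi_0|$, independent of $k$, so only the modulus moves, and the modulus is the monotone continuous function $\rho\mapsto (\rho K_0-1)/(\rho K_0+1)$ composed with $|f_{n_k}|$; since $|f_{n_k}|\to|f_0|$ locally uniformly and $\rho$ is continuous, $|\mu_{n_k}|\to$ the right limit pointwise a.e., hence in every $L^p_{loc}$, $p<\infty$. Combined with the standard fact that $f\mapsto\mu_f$ is continuous under locally uniform convergence of uniformly (locally) quasiconformal maps, this gives $\mu_{f_0}(z) = \Phi[\,|f_0(z)|\,]$ a.e., which is precisely \eqref{eq:lem2}. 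Finally I would record that $f_0$ extends homeomorphically to $\overline\D$ by Lemma \ref{lem:1}, completing the proof. An alternative to the compactness argument, if one wants an honest fixed point, is to observe that the map $g\mapsto |\,(\text{solution with coefficient }\Phi[g])\,|$ is monotone in an appropriate order on functions $\D\to[0,1)$ and apply a Knaster–Tarski-type fixed point theorem; I would mention this as a remark but carry out the compactness route in detail.
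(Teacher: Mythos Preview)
Your approach is genuinely different from the paper's, and it contains a real gap that you yourself flag but do not close.

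The paper does not iterate. It recognises that \eqref{eq:lem2} is a \emph{nonlinear} Beltrami equation of the form $f_{\bar z}=H(z,f,f_z)$ with
\[
H(z,w,\zeta)=\frac{\rho(|w|)K_0-1}{\rho(|w|)K_0+1}\cdot\frac{\overline{\varphi_0(z)}}{|\varphi_0(z)|}\cdot\zeta,
\]
and invokes the existence theory for such systems (Theorem 8.2.1 of Astala--Iwaniec--Martin). The implicit dependence of $\mu_{f_0}$ on $f_0$ is absorbed into the $w$-variable of $H$, and the fixed-point argument is hidden inside that existence theorem. The paper then truncates $H$ in the $z$-variable to make it compactly supported, obtains principal solutions $f_n:\C\to\C$, and extracts a locally uniform limit by a diagonal argument, post-composing with a Riemann map to land in $\D$. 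Finally it checks $f_0^{-1}\in QC_\rho(\D)$ directly from \eqref{eq:lem2}.

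Your iterative scheme, by contrast, hinges on the uniform bound $D_{f_{n+1}}(z)\le K_0\rho(|z|)$, which you say ``will follow once I control $|f_n(z)|$ in terms of $|z|$''. But you never supply that control, and there is no evident mechanism for it: a locally quasiconformal self-map of $\D$ can send points of small modulus arbitrarily close to the boundary, and $\rho(|f_n(z)|)$ can then be enormously larger than $\rho(|z|)$. Without this bound you cannot (a) guarantee that each $\mu_{n+1}$ has a distortion majorant of the form $C\rho(|z|)$, hence cannot invoke \cite{21} or \cite{25} to produce $f_{n+1}$ as a self-map of $\D$, and (b) run the compactness step. Relatedly, your claim that $f_{n+1}^{-1}\in QC_\rho(\D)$ at intermediate stages is not right: since $D_{f_{n+1}^{-1}}(w)=D_{f_{n+1}}(f_{n+1}^{-1}(w))=K_0\rho\bigl(|f_n(f_{n+1}^{-1}(w))|\bigr)$, this equals $K_0\rho(|w|)$ only when $f_n=f_{n+1}$, i.e.\ at the fixed point, not along the iteration. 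The Knaster--Tarski alternative has the same defect: there is no order on functions $\D\to[0,1)$ under which $g\mapsto|f_{\Phi[g]}|$ is visibly monotone. The nonlinear Beltrami route avoids all of this by never needing to bound the distortion of an approximant in terms of $\rho(|z|)$; it only needs uniform ellipticity of the truncated $H_n$, which is a bound in the $(w,\zeta)$-variables, and then normality comes from the global quasiconformality of the principal solutions on $\C$.
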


We prove this lemma in the next section. Since elements of $\mathcal{L}_{\rho}(\D)$ extend homeomorphically to the boundary, we may always assume that we have post-composed by a M\"obius map so that elements of $\mathcal{L}_{\rho}(\D)$ fix $1,-1$ and $i$.

\begin{definition}
\label{def:teich}
Let $\rho$ be allowable. Then $f,g\in \mathcal{L}_{\rho}(\D)$ are Teichm\"uller related with respect to $\rho$, denoted by $f\sim g$, if and only if the boundary extensions of $f$ and $g$, normalized to fix $1,-1,i$, agree. We then define the {\it generalized Teichm\"uller space with respect to $\rho$} by
\[ T_{\rho}(\D) = \mathcal{L}_{\rho}(\D) / \sim.\]
\end{definition}

Elements of $T_{\rho}(\D)$ are Teichm\"uller equivalence classes denoted by $[f]_{\rho}$, or simply $[f]$ if the context is clear.
Given $[f_0] \in T_{\rho}(\D)$, every representative of $[f_0]$ is a locally quasiconformal map $f$ whose boundary values agree with $f_0$ and so that $K^{\rho}_{f^{-1}} < \infty$.

\begin{definition}
\label{def:extremal}
Let $[f_0] \in T_{\rho}(\D)$.
\begin{enumerate}[(i)]
\item We say that $f\in [f_0]$ is {\it extremal} if $K^{\rho}_{f^{-1}} \leq K^{\rho}_{g^{-1}}$ for all $g\in [f_0]$.
\item We say that $f\in [f_0]$ is {\it uniquely extremal} if $K^{\rho}_{f^{-1}} < K^{\rho}_{g^{-1}}$ for all $g\in [f_0] \setminus \{ f \}$.
\end{enumerate}
\end{definition}

Our first result on extremal maps is that extremal representatives always exist.

\begin{theorem}
\label{thm:extremal}
Let $\rho$ be allowable and let $[f_0] \in T_{\rho}(\D)$. Then there exists an extremal representative $f\in [f_0]$.
\end{theorem}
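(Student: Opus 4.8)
The plan is to use a standard compactness argument adapted to the functional $K^\rho_{f^{-1}}$. Set $k_0 = \inf\{ K^\rho_{g^{-1}} : g \in [f_0]\}$; by definition $k_0 \geq 1$ and is finite since $[f_0]$ is nonempty. Choose a minimizing sequence $g_n \in [f_0]$ with $K^\rho_{g_n^{-1}} \to k_0$. Write $h_n = g_n^{-1}$, so $h_n \in QC_\rho(\D)$ with $D_{h_n}(z) \leq K^\rho_{h_n}\,\rho(|z|) \leq (k_0+1)\rho(|z|)$ for all large $n$; thus the $h_n$ have a uniform $\rho$-dilatation bound, and the $g_n$ all share the same boundary values (normalized at $1,-1,i$). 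The first step is to extract a locally uniformly convergent subsequence of the $g_n$ (or of the $h_n$). On any compact $E \subset \D$, the maps $h_n$ are uniformly $K_E$-quasiconformal for $K_E = (k_0+1)\sup_{z \in E}\rho(|z|) < \infty$, so by the standard normal-family theorem for quasiconformal maps, together with the fact that the $g_n$ map $\D$ onto $\D$ and fix three boundary points (which prevents degeneration), a diagonal argument over an exhaustion of $\D$ produces a subsequence with $g_n \to f$ and $h_n \to h$ locally uniformly, where $f,h:\D\to\D$ are locally quasiconformal and mutually inverse; and $h \in QC_\rho(\D)$ since the bound $D_h(z) \leq (k_0+1)\rho(|z|)$ survives in the limit (lower semicontinuity of dilatation under locally uniform convergence of quasiconformal maps on compacta). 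In particular $f \in \mathcal{L}_\rho(\D)$, and by Lemma \ref{lem:1} it extends homeomorphically to $\overline{\D}$.

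The second step is to show $f \in [f_0]$, i.e.\ the boundary extension of $f$ agrees with that of $f_0$. This is the place where one must be careful: locally uniform convergence in the interior does not automatically control boundary behavior. I would argue as follows. The complex dilatations $\mu_{h_n}$ are bounded by $1$ and satisfy $|\mu_{h_n}(z)| \leq \frac{(k_0+1)\rho(|z|)-1}{(k_0+1)\rho(|z|)+1}$; pass to a weak-$*$ limit $\mu$ in $L^\infty_{\mathrm{loc}}$, which is the dilatation of $h$. The uniform growth bound on $D_{h_n}$ and condition (iii) of Definition \ref{def:rho} give an equicontinuity estimate for the boundary extensions $\widetilde{h_n}$ via the modulus-of-continuity results of \cite[Theorem 1]{6} (or \cite[Theorem 1.1]{25}) that underlie Lemma \ref{lem:1}: the same $\rho$-dependent modulus of continuity governs every $\widetilde{h_n}$, so $\{\widetilde{h_n}\}$ is equicontinuous on $\partial\D$, hence (Arzel\`a--Ascoli, after a further subsequence) converges uniformly on $\partial\D$ to the boundary extension of $h$. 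Therefore $\widetilde{g_n} = \widetilde{h_n}^{-1}$ converges uniformly to $\widetilde f$, and since every $\widetilde{g_n}$ equals the common boundary map $\widetilde{f_0}$, we get $\widetilde f = \widetilde{f_0}$, i.e.\ $f \in [f_0]$.

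The final step is to check that $f$ achieves the infimum: $K^\rho_{f^{-1}} = K^\rho_h = \sup_{z\in\D} D_h(z)/\rho(|z|) \leq k_0$ by the limiting bound above, while $K^\rho_{f^{-1}} \geq k_0$ because $f \in [f_0]$; hence $K^\rho_{f^{-1}} = k_0$ and $f$ is extremal. The main obstacle is the second step: transferring the interior convergence to genuine agreement of the boundary maps. The key is that the extension theorems cited in the paper (Definition \ref{def:rho}(iii), \cite[Theorem 1]{6}, \cite[Theorem 1.1]{25}) produce a modulus of continuity for $\widetilde{h}$ depending only on the constant $C$ in $D_h \leq C\rho(|z|)$ and on $\rho$ itself, not on $h$; since our sequence has a uniform such $C$, this modulus is uniform along the sequence, which is exactly what powers the Arzel\`a--Ascoli step. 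One should also verify the minor point that the uniform interior limit of maps onto $\D$ fixing $1,-1,i$ is again onto $\D$ and not a constant or a map into a proper subdomain — this follows from the three-point normalization together with the fact that the inverses $h_n$ are also locally uniformly convergent to a nondegenerate limit.
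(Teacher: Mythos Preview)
Your proof is correct and follows the same compactness strategy as the paper: take a minimizing sequence for $K^\rho_{g^{-1}}$, apply a normal-family argument to the inverses (this is exactly the paper's Lemma~\ref{lem:normal}), and pass to the limit. The paper's version is considerably terser: it invokes the boundary agreement of the $f_{n_k}^{-1}$ only to rule out a constant limit and then simply asserts that $f=h^{-1}$ is extremal in $[f_0]$, without explicitly checking that the limit map inherits the common boundary values. You rightly flag this as the delicate step and supply a justification via the uniform (in $n$) modulus of continuity coming from condition~(iii) of Definition~\ref{def:rho} and an Arzel\`a--Ascoli argument on $\overline{\D}$; this fills a gap the paper leaves implicit, so your write-up is in fact more complete than the published proof while following the same line.
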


We show next that uniquely extremal representatives exist.

\begin{theorem}
\label{thm:ue}
Let $\rho$ be allowable, $K_0>1$, $\varphi_0 \in A(\D)$ and suppose $f_0 \in T_{\rho}(\D)$ has Teichm\"uller-type complex dilatation
\[ \mu_{f_0}(z) = \frac{ \rho(|f_0(z)|)K_0 - 1}{\rho(|f_0(z)|)K_0+1} \cdot \frac{ \overline{\varphi_0(z)}}{|\varphi_0(z)|}.\]
Then $f_0$ is uniquely extremal in $[f_0]$.
\end{theorem}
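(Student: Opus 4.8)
The plan is to imitate the classical proof that a Teichm\"uller differential yields a uniquely extremal representative, carrying the weight $\rho$ through a length--area argument. Two things are needed. First, a direct computation gives $K^{\rho}_{f_0^{-1}}=K_0$: the stated form of $\mu_{f_0}$ yields $|\mu_{f_0}(z)|=\frac{K_0\rho(|f_0(z)|)-1}{K_0\rho(|f_0(z)|)+1}$, hence $D_{f_0}(z)=K_0\rho(|f_0(z)|)$ for all $z$; since $D_{f_0^{-1}}(w)=D_{f_0}(f_0^{-1}(w))$, putting $w=f_0(z)$ gives $D_{f_0^{-1}}(w)=K_0\rho(|w|)$, so $K^{\rho}_{f_0^{-1}}=\sup_w D_{f_0^{-1}}(w)/\rho(|w|)=K_0$. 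It therefore suffices to prove that if $g\in[f_0]$ and $K^{\rho}_{g^{-1}}\le K_0$ then $g=f_0$.

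The core is a weighted fundamental inequality. Write $F=f_0^{-1}$, let $J_F$, $J_g$ denote Jacobian determinants, and put $|\psi_0(w)|:=|\varphi_0(F(w))|\,J_F(w)$, a positive density on $\D$ with $\iint_\D|\psi_0|=\iint_\D|\varphi_0|$. I claim that for every $g\in[f_0]$,
\[ K_0\iint_\D|\varphi_0|\;\le\;\iint_\D\frac{|\psi_0(g(z))|\,|g_z(z)|^2\,\bigl|1+\mu_g(z)\varphi_0(z)/|\varphi_0(z)|\bigr|^{2}}{\rho(|g(z)|)}\,dx\,dy. \]
To prove it, let $\zeta$ be the natural parameter of $\varphi_0$ (so $d\zeta=\sqrt{\varphi_0(z)}\,dz$), set $\Lambda=\mathrm{Re}\,\zeta\circ F$, and let $\gamma$ range over the nonsingular horizontal trajectories of $\varphi_0$, parametrised by $\varphi_0$-arclength $s$. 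By Lemma~\ref{lem:1}, $f_0$ and $g$ extend homeomorphically to $\o\D$ and, since $g\sim f_0$, agree on $\p\D$; hence $F\circ g$ is a self-homeomorphism of $\o\D$ fixing $\p\D$, so it is isotopic to the identity rel $\p\D$, and therefore $(F\circ g)(\gamma)$ is homotopic to $\gamma$ rel endpoints. Consequently the total variation of $\mathrm{Re}\,\zeta$ along $(F\circ g)(\gamma)$ is at least $\ell_{\varphi_0}(\gamma)$, i.e. $\int_{g(\gamma)}|d\Lambda|\ge\ell_{\varphi_0}(\gamma)$. On the other hand, because $\mu_{f_0}$ has Teichm\"uller form ($\mu_{f_0}\varphi_0/|\varphi_0|=|\mu_{f_0}|\ge 0$), the horizontal $\varphi_0$-direction at $z$ is the maximal-stretch direction of $f_0$ there, with stretch factor $(J_{f_0}(z)K_0\rho(|f_0(z)|))^{1/2}$; since $\nabla(\mathrm{Re}\,\zeta)$ points in that direction, the chain rule gives $|\nabla\Lambda(w)|=|\varphi_0(F(w))|^{1/2}/(J_{f_0}(F(w))K_0\rho(|w|))^{1/2}=(K_0\rho(|w|))^{-1/2}|\psi_0(w)|^{1/2}$, whence $|d\Lambda|\le(K_0\rho(|w|))^{-1/2}|\psi_0(w)|^{1/2}|dw|$. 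Feeding this into the previous estimate, squaring, applying the Cauchy--Schwarz inequality along $\gamma$ (using that along $\gamma$ one has $|dw|^2=|g_z|^2\bigl|1+\mu_g\varphi_0/|\varphi_0|\bigr|^2|\varphi_0|^{-1}\,ds^2$ with $w=g(z)$), and integrating over the transverse measure of the $\varphi_0$-foliation (Fubini) yields the displayed inequality.

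Granting it, the proof concludes quickly. Bounding $\bigl|1+\mu_g\varphi_0/|\varphi_0|\bigr|^2\le(1+|\mu_g|)^2$ makes the numerator at most $|g_z|^2(1+|\mu_g|)^2=J_g\,D_g$, and $D_g(z)/\rho(|g(z)|)\le K^{\rho}_{g^{-1}}$; the substitution $w=g(z)$ gives $\iint_\D|\psi_0(g(z))|\,J_g(z)\,dx\,dy=\iint_\D|\psi_0|=\iint_\D|\varphi_0|$, so the right-hand side is $\le K^{\rho}_{g^{-1}}\iint_\D|\varphi_0|$. Hence $K^{\rho}_{g^{-1}}\ge K_0$ (which in particular shows that $f_0$ itself is extremal in $[f_0]$). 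If moreover $K^{\rho}_{g^{-1}}\le K_0$, every inequality used is an equality a.e.\ (the zero set of $\varphi_0$ being negligible), which forces simultaneously $\mu_g(z)\varphi_0(z)/|\varphi_0(z)|\ge 0$ and $D_g(z)=K_0\rho(|g(z)|)$; that is, $\mu_g(z)=\frac{K_0\rho(|g(z)|)-1}{K_0\rho(|g(z)|)+1}\cdot\frac{\o{\varphi_0(z)}}{|\varphi_0(z)|}=\mu_{f_0}(z)$ a.e. Then, exactly as in the proof of Lemma~\ref{lem:1}, $g\circ f_0^{-1}$ has vanishing complex dilatation, hence is a conformal self-map of $\D$ fixing $1,-1,i$, hence the identity, so $g=f_0$.

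The main obstacle is carrying out the length--area step rigorously. Classically the stretch factor is a constant, the terminal object of $f_0$ is again a holomorphic quadratic differential, and one simply quotes the Reich--Strebel inequality for the identity-homotopic map $g\circ f_0^{-1}$; here the stretch $K_0\rho(|f_0(z)|)$ varies, the analogue $|\psi_0|$ of the terminal differential is only a measurable density, and the estimate must be built by hand. The delicate ingredients are: the structure of the $\varphi_0$-trajectory foliation near the zeros of $\varphi_0$ and near $\p\D$, so that the nonsingular leaves fill $\D$ up to a null set and the transverse-measure Fubini calculus applies; the homotopy-monotonicity of $\mathrm{Re}\,\zeta$ under the isotopy; the fact that $f_0$ and $g$ are merely locally quasiconformal and need not be globally absolutely continuous, handled by exhausting $\D$ by compacta on which they are quasiconformal and passing to the limit; and the finiteness of the integrals, which rests on $\varphi_0\in A(\D)$ and $g\in\mathcal{L}_\rho(\D)$ --- the boundary homotopy with which the argument begins, where conditions (ii) and (iii) of Definition~\ref{def:rho} enter, being furnished by Lemma~\ref{lem:1}.
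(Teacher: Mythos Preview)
Your approach differs from the paper's: rather than building the weighted length--area inequality by hand, the paper applies the generalized Reich--Strebel Main Inequality of Markovi\'c--Mateljevi\'c directly to the map $f^{-1}\circ f_0$ (which is locally quasiconformal, extends to $\overline{\D}$, and is the identity on $\partial\D$), and then unwinds the composition formula for its complex dilatation with the test function $\varphi=-\varphi_0$. This sidesteps the trajectory-foliation, exhaustion, and integrability issues you flag in your final paragraph, and delivers the extremality inequality $K_0\le K^{\rho}_{f^{-1}}$ in a few lines.

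There is, however, a genuine gap in your uniqueness step. From the equality case you correctly extract
\[
\mu_g(z)=\frac{K_0\rho(|g(z)|)-1}{K_0\rho(|g(z)|)+1}\cdot\frac{\overline{\varphi_0(z)}}{|\varphi_0(z)|},
\]
and then assert ``$=\mu_{f_0}(z)$ a.e.''. But $\mu_{f_0}$ carries $\rho(|f_0(z)|)$, not $\rho(|g(z)|)$; what you have actually shown is only that $g$ satisfies the \emph{same implicit nonlinear Beltrami equation} as $f_0$, with the same boundary data. No uniqueness theorem for that equation has been invoked, and none is supplied in the paper (Lemma~\ref{lem:2}, via Lemma~\ref{lem:h}, gives existence only). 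The paper avoids this trap by arranging the equality case so that it determines the dilatation of the \emph{inverse} map explicitly: writing $\mu_1=\mu_{f^{-1}}\circ f_0$ and $\tau=\overline{(f_0)_z}/(f_0)_z$, equality forces
\[
\mu_1(z)=-\frac{1}{\tau}\cdot\frac{K_0\rho(|f_0(z)|)-1}{K_0\rho(|f_0(z)|)+1}\cdot\frac{\overline{\varphi_0}}{|\varphi_0|},
\]
a formula involving only $f_0$-data. Since this coincides with $\mu_{f_0^{-1}}\circ f_0$, one gets $\mu_{f^{-1}}=\mu_{f_0^{-1}}$ as functions on $\D$, and the common boundary values then give $f=f_0$. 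Your argument, as written, never reaches an explicit (i.e.\ $g$-free) formula for $\mu_g$ or $\mu_{g^{-1}}$, so the final identification $g=f_0$ is not justified.
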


We next turn to the boundary mapping induced by an element $[f] \in T_{\rho}(\D)$. It is well-known that every quasiconformal self-map of $\D$ extends to a quasisymmetric map of the unit circle and, conversely, every quasisymmetric map of the unit circle extends to a quasiconformal map of $\D$. For $h:\partial \D \to \partial \D$, we define the quasisymmetric function
\[ \lambda_h(\xi, t) =  \frac{ | h(\xi e^{it}) - h(\xi) |}{ | h(\xi) - h(\xi e^{-it}) |} .\]
This is the circle version of the standard quasisymmetric function considered by many authors, for example \cite{6,Zakeri}.

\begin{theorem}
\label{thm:qs}
Let $\rho$ be allowable and $[f]\in T_{\rho}(\D)$ with associated circle map $h$. Then there exists a function $\lambda$ depending only on $\rho$ such that
\[ \frac{1}{\lambda(t)} \leq \lambda_{h^{-1}}(\theta, t) \leq \lambda(t)\]
for all $\theta \in [0,2\pi)$.
\end{theorem}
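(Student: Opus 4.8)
The plan is to reduce the statement to a purely geometric estimate on the map $g := f^{-1} \in QC_\rho(\D)$, whose boundary extension is exactly $h^{-1}$, and then to control the boundary distortion of $g$ using the modulus-of-continuity machinery for mappings of finite (locally bounded) distortion that already underlies conditions (ii) and (iii) of Definition~\ref{def:rho}. Concretely: by Lemma~\ref{lem:1}, any representative $f\in[f]$ extends homeomorphically to $\overline\D$, and $g=f^{-1}\in QC_\rho(\D)$ with $K^\rho_g =: K<\infty$, so that $D_g(z)\le K\rho(|z|)$ on $\D$. The function $\lambda_{h^{-1}}(\theta,t)$ measures the relative sizes of the images under $g$ of two adjacent boundary arcs of comparable length $t$ based at $e^{i\theta}$; I would estimate both the numerator and denominator in terms of $t$ and the fixed data $\rho$, $K$ only, and the claim is that the resulting bound $\lambda(t)$ is in fact independent of $K$ because all of the $K$-dependence is absorbed into the normalization $K^\rho_g\ge 1$ together with the three-point normalization $h^{-1}(1)=1$, $h^{-1}(-1)=-1$, $h^{-1}(i)=i$.

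The key steps, in order, are as follows. First, translate the condition $D_g(z)\le K\rho(|z|)$ into a modulus estimate: for a ring domain $A$ compactly contained in $\D$, the $K$-quasiconformal-on-compacta hypothesis gives $\operatorname{mod}(g(A)) \le \Phi_\rho(\operatorname{mod}(A),\operatorname{dist}(A,\partial\D))$ for an explicit function built from the integral $\int \rho(|z|)\,d\theta$ — this is precisely the kind of estimate proved in the references \cite{6,25} cited after Definition~\ref{def:rho} to get boundary extension, and condition (iii) guarantees that the relevant integral diverges, i.e. that points of $\partial\D$ are not ``collapsed.'' Second, for a boundary point $\xi=e^{i\theta}$ and parameter $t$, use a Teichm\"uller or Gr\"otzsch ring separating the two arcs $[\xi,\xi e^{it}]$ and $[\xi e^{-it},\xi]$ from the ``far'' part of $\partial\D$, apply the modulus estimate to $g$, and read off an upper bound for the ratio $\lambda_{h^{-1}}(\theta,t)$ in terms of $t$ alone; the lower bound follows by symmetry (swapping the roles of the two arcs). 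Third, upgrade the local estimate to one that is uniform in $\theta$: here is where the three-point normalization enters, since without it the constants would depend on the position of $g$ on the circle, but fixing $1,-1,i$ pins down the map enough that a compactness/continuity argument in $\theta$ (the circle is compact) yields a single function $\lambda(t)$ working for all $\theta$.

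The main obstacle I anticipate is Step three, namely obtaining a bound that is genuinely \emph{uniform in $\theta$ and independent of the representative}. The naive modulus estimate from Step one will produce a function $\Phi_\rho$ that depends on $\operatorname{dist}(A,\partial\D)$, and as $t\to 0$ the natural rings degenerate toward the boundary, so one must be careful that the divergence in condition (iii) is used in exactly the right place to keep the estimate finite for each fixed $t>0$ while still letting $\lambda(t)\to\infty$ (or $1$) at a controlled rate as $t\to 0$. A secondary subtlety is that $\lambda_{h^{-1}}$ is defined via \emph{chordal} distances on the circle rather than arclength, so near the antipode of $\xi$ one must convert between the two; this is routine but needs the normalization to prevent the image arcs from accumulating near a single point. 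I expect the cleanest route is to prove first that $h^{-1}$ is $\eta$-quasisymmetric in the weak (Beurling–Ahlfors) sense with $\eta=\eta_\rho$ depending only on $\rho$, and then specialize to the symmetric two-arc ratio to extract $\lambda(t)$.
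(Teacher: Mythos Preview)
Your broad strategy---pass to $g=f^{-1}\in QC_\rho(\D)$ and estimate the boundary ratio via moduli of ring-like configurations---is the right one, and it is what the paper does. But two of the obstacles you flag are not real, and the genuine technical step is missing.

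First, the uniformity in $\theta$ and the independence of the representative have nothing to do with the three-point normalization or a compactness argument on the circle. They come for free from rotational symmetry: since $\rho$ depends only on $|z|$, the weight $\rho^*(r)=\int_{S(\xi,r)\cap\D}\rho(|z|)\,d\theta$ is the same for every $\xi\in\partial\D$, and the radii $s=|e^{it/2}-1|$, $S=|e^{3it/2}-1|$ of the relevant half-annulus depend only on $t$. The paper's modulus lemma (Lemma~\ref{lem:qs}, quoted from \cite{25}) is stated for any $f\in QC_\rho(\D)$ with no constant $C$ appearing in the conclusion, so there is no $K$-dependence to ``absorb'' at all. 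Your proposed route through $\eta$-quasisymmetry plus a compactness argument in $\theta$ would introduce dependence on the particular $g$ that the direct argument avoids.

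Second, your Step one speaks of rings ``compactly contained in $\D$'' and an estimate involving $\operatorname{dist}(A,\partial\D)$; this cannot work, because the configurations that see the boundary ratio must touch $\partial\D$. The paper's device is different: take the half-annulus (quadrilateral) $Q(w,s,S)\subset\overline\D$ centred at the midpoint $w=\xi e^{it/2}$, apply Lemma~\ref{lem:qs} to bound $\operatorname{mod} g(Q)$ from below by $\int_s^S dr/(r\rho^*(r))$, then \emph{reflect} $g(Q)$ across $\partial\D$ to obtain a genuine ring $\Omega\subset\C$ whose connecting-family modulus is $2/\operatorname{mod} g(Q)$. Now the bounded complementary component contains $g(\xi)$ and $g(\xi e^{it})$ while the unbounded one contains $g(\xi e^{-it})$ and $\infty$, so Vuorinen's Teichm\"uller capacity bound (Lemma~\ref{lem:v}) gives directly
\[
\lambda_{h^{-1}}(\xi,t)\ \le\ \Bigl[\tau^{-1}\Bigl(2\Big/\!\int_s^S \tfrac{dr}{r\rho^*(r)}\Bigr)\Bigr]^{-1}=:\lambda(t),
\]
with the lower bound obtained by centring the half-annulus at $\xi e^{-it/2}$ instead. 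This reflection step is the missing idea in your outline; once you have it, no normalization, no compactness in $\theta$, and no chordal-versus-arclength conversion is needed.
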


In proving this result we will give an explicit formula for $\lambda(t)$. Note that we may have $\lambda(t) \to \infty$ as $t\to 0$.
Theorem \ref{thm:qs} is a circle version of a result proved for the extended real line in \cite{6}. However, the extended real line has a special point at infinity, whereas the circle has no special point. In particular, it is not a trivial task to obtain our result from \cite{6}.

\section{Proofs of results}

\subsection{Generalized Teichm\"uller-type dilatations}
Here we prove Lemma \ref{lem:2}. First, we recall some results on non-linear elliptic systems from \cite[Chapter 8]{3}.
Assume that $h:\C\times\C\times\C \to \C$ satisfies the following conditions:
\begin{enumerate}[(i)]
\item the homogeneity condition that $ f_{\bar{z}} = 0$ whenever $f_z = 0$, or equivalently,
\[ H(z,w,0)\equiv 0,\,\, \text{for almost every}\,\, (z,w)\in \C\times \C; \]
\item the uniform ellipticity condition that for almost every $z,w \in \C $ and all $\zeta,\xi \in \C,$
\[ |H(z,w,\zeta)-H(z,w,\xi)|\leq k|\zeta -\xi|,\]
for some $0\leq k<1$;
\item $H$ is Lusin measurable (see \cite[p.238]{3} for further details).
\end{enumerate}
A solution $f\in W^{1,2}_{loc}(\C)$ to
\begin{equation}
\label{eq:elliptic}
\frac{\partial f}{ \partial \bar{ z}}= H(z,\,f,\,\frac{\partial f}{\partial z}),
\end{equation}
for $z\in \C$ and normalized by the condition
\[ f(z)=z+a_1z^{-1}+a_2z^{-2}+\cdots\]
outside a compact set will be called a {\it principal solution}. A homeomorphic solution $f\in W^{1,2}_{loc}(\C)$ to \eqref{eq:elliptic} is called {\it normalized} if $f(0)=0$ and $f(1)=1$. Naturally any such solution fixes the point at infinity too.

\begin{lemma}[Theorem 8.2.1, \cite{3}]
\label{lem:h}
Under the hypotheses above, equation \eqref{eq:elliptic} admits a normalized solution. If, in addition, $H(z,w,\zeta)$ is compactly supported in the $z$-variable, then the equation admits a principal solution.
\end{lemma}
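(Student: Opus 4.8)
The plan is to recast the first-order system \eqref{eq:elliptic} as a scalar fixed-point problem for the single unknown $\omega := f_{\zbar}$, solved by the singular-integral calculus of the plane. Recall the Cauchy transform $\mathcal{C}$ and the Beurling--Ahlfors transform $S$, which satisfy $(\mathcal{C}\omega)_{\zbar} = \omega$ and $(\mathcal{C}\omega)_z = S\omega$ distributionally. I would treat the compactly-supported (principal) case first, as the building block. There one seeks $f$ of the form $f(z) = z + \mathcal{C}\omega(z)$, so $f_{\zbar} = \omega$ and $f_z = 1 + S\omega$; substituting into \eqref{eq:elliptic} reduces everything to
\[ \omega = \mathcal{N}[\omega], \qquad \mathcal{N}[\omega](z) := H\bigl(z,\, z + \mathcal{C}\omega(z),\, 1 + S\omega(z)\bigr). \]
Conversely, any $\omega$ solving this recovers a solution, and the normalization $f(z) = z + a_1 z^{-1} + \cdots$ at infinity follows from the expansion of $\mathcal{C}\omega$, since $H(z,w,\zeta)=0$ for $z$ outside a compact set forces $\omega = 0$ there, so $\omega$ is compactly supported.

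The core quantitative input is the standard fact that $\|S\|_{L^p(\C)\to L^p(\C)} \to 1$ as $p \to 2$, because $\|S\|_2 = 1$ and the norm is continuous in $p$. Fixing $p>2$ close enough to $2$ that $k\,\|S\|_p < 1$, the uniform ellipticity hypothesis (ii) controls the third slot:
\[ \bigl| H(z,w,1+S\omega_1) - H(z,w,1+S\omega_2) \bigr| \le k\,\bigl| S\omega_1 - S\omega_2 \bigr|, \]
so that, with the second slot held fixed, $\omega \mapsto H(z,w,1+S\omega)$ is a $k\|S\|_p$-contraction on $L^p$. The homogeneity hypothesis (i) supplies the companion pointwise bound $|H(z,w,\zeta)| = |H(z,w,\zeta)-H(z,w,0)| \le k|\zeta|$, which simultaneously keeps $\mathcal{N}$ bounded on $L^p$ and shows a priori that \emph{every} candidate solution obeys $|f_{\zbar}| \le k|f_z|$; that is, $f$ is $K$-quasiregular with $K = (1+k)/(1-k)$.

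The main obstacle is the coupling through the second slot $w=f$, where $H$ is only Lusin measurable and \emph{not} Lipschitz, so one cannot contract in $w$ and the pure Banach argument does not close. I would resolve this by exploiting the a priori quasiregularity just established. For $p>2$ the Cauchy transform maps bounded subsets of $L^p$ into locally equicontinuous (H\"older) families, so the uniform $K$-quasiregular bound makes the relevant set of candidate maps precompact in the locally uniform topology by the normal-family theory for quasiregular maps. This compactness upgrades the contraction-in-$\zeta$ step to a genuine fixed point via Schauder's theorem: the operator $\mathcal{N}$ is continuous (the Lusin property plus the uniform bound $|H|\le k|\zeta|$ yield continuity of $w\mapsto H(z,w,\zeta)$ along convergent sequences by dominated convergence) and compact, and it preserves a bounded $L^p$-ball, producing a solution of $\omega=\mathcal{N}[\omega]$ and hence the principal solution. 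It is precisely here—passing from the Lipschitz-in-$\zeta$ estimate to a fixed point despite the merely measurable $w$-dependence—that the argument is delicate.

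Finally, for the general (non-compactly-supported) case I would obtain the normalized solution by exhaustion: replace $H$ by its $z$-truncation $H_n := H\cdot\chi_{B(0,n)}$, apply the principal-solution step to each $H_n$, and renormalize each resulting map by a M\"obius change so that $f_n(0)=0$ and $f_n(1)=1$. The uniform $K$-quasiregular bound makes $\{f_n\}$ a normal family, so a locally uniform limit $f$ exists; one then checks that $f$ is a $K$-quasiconformal homeomorphism solving \eqref{eq:elliptic}, using that $H_n \to H$ off a null set and that the dilatations converge appropriately. The two points requiring care are that the limit is nondegenerate (a homeomorphism rather than constant), which follows from the lower Jacobian bounds for $K$-quasiregular maps, and that the normalization $f(0)=0,\,f(1)=1$ survives the limit, which it does because the two fixed points are preserved along the sequence.
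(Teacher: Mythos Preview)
The paper does not prove this lemma; it is quoted verbatim as Theorem 8.2.1 of Astala--Iwaniec--Martin \cite{3} and used as a black box in the proof of Lemma~\ref{lem:2}. There is therefore no ``paper's own proof'' to compare against.

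Your sketch is a reasonable outline of the argument in \cite{3}: reduction via the Cauchy and Beurling transforms to a fixed-point equation for $\omega=f_{\zbar}$, the $L^p$ contraction in the $\zeta$-slot coming from uniform ellipticity and $\|S\|_p\to 1$ as $p\to 2$, the a~priori quasiregularity bound $|f_{\zbar}|\le k|f_z|$ from homogeneity, Schauder to cope with the merely Lusin-measurable $w$-dependence, and exhaustion plus normal families for the non-compactly-supported case. One organizational point worth tightening: in \cite{3} the Schauder step is arranged by first \emph{freezing} $w=g$, solving $f_{\zbar}=H(z,g,f_z)$ completely by Banach contraction to obtain an operator $T\colon g\mapsto f$, and then applying Schauder to $T$ on a space of normalized $K$-quasiconformal maps, where compactness is immediate from normal-family theory. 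Your version applies Schauder directly to $\mathcal N$ on $L^p$, and the compactness claim there is not quite right as stated: precompactness of $\{z+\mathcal C\omega\}$ in the locally uniform topology does not by itself give relative compactness of $\{\omega\}$ in $L^p$. The two-step decomposition (Banach in $\zeta$, then Schauder in $w$) separates the topologies cleanly and avoids this gap.
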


We now prove Lemma \ref{lem:2}.

\begin{proof}[Proof of Lemma \ref{lem:2}]
Recalling \eqref{eq:lem2}, the equation we want to solve is
\[ \frac{\partial f_0}{\partial \bar{z}}=\frac{\rho(|f_0(z)|)K_0-1}{\rho(|f_0(z)|)K_0+1}\cdot \frac{\overline{\varphi_0}}{|\varphi_0|}\cdot \frac{\partial f_0}{\partial z}.\]
To that end, let
\[ H(z,w,\zeta)=H(z,f_0,\frac{\partial f_0}{\partial z})=\frac{\rho(|f_0|)K_0-1}{\rho(|f_0|)K_0+1}\cdot \frac{\overline{\varphi_0(z)}}{|\varphi_0(z)|}\cdot\frac{\partial f_0}{\partial z}.\]
For $n= 2,3,\ldots$, set $\D_n = \{z:\,|z|<1-\frac{1}{n}\}$ and $D_n=\{z:\,1-\frac{1}{n}< |z|<1-\frac{1}{n^2}\}$.
By interpolating in $\overline{D_n}$ with a continuous function $F_n$ satisfying $|F_n(z,w,\zeta)| \leq |H(z,w,\zeta)|$, there exists a continuous function
\[ H_n(z,w,\zeta):=\begin{cases}H(z,w,\zeta)\,\,\quad&\text{when}\,\,z\in \Delta_n  \\ F_n(z,w,\zeta)\,\,\quad&\text{when}\,\,z\in \overline{ D_n} \\
0&\text{when}\,\,z\in
\C-\overline{\Delta_n}-\overline{D_n}\end{cases}.\]
Applying Lemma \ref{lem:h}, we see that the equation
\[ \frac{\partial f}{\partial \bar{z}}= H_n(z,w,\frac{\partial f}{\partial z})\]
admits a principal and normalized solution $f_n:\C\to\C$.

For every fixed $j$, the family $\{f_n|_{\Delta_j} \}$ consists of quasiconformal mappings with a uniform bound on the distortion. Since $f_n(0) = 0$ and $f_n(1)=1$ for all $n$, the family is uniformly bounded in $\Delta_j$. It follows from the quasiconformal version of Montel's Theorem (see \cite{Miniowitz}) that the family $\{f_n|_{\Delta_j} \}$ is normal. By a standard diagonal argument, we can find a subsequence $(f_{n_p})_{p=1}^{\infty}$ which converges locally uniformly in $\D$. Suppose $f_0$ is the limit function. The image $f_0(\D)$ may not be $\D$, but it must be a simply connected proper subset of $\C$. By post-composing by a suitable conformal map, via the Riemann Mapping Theorem, we can assume that $f_0:\D\to \D$ and $f_0$ still fixes $0$ and $1$. Then $f_0$ is either a locally quasiconformal mapping (since it is quasiconformal on each $\D_j$) with complex dilatation \eqref{eq:lem2} or a constant. However, since $f_0(0) \neq f_0(1)$, $f_0$ cannot be constant.

Finally, we need to show that $f_0 \in \mathcal{L}_{\rho}(\D)$. Set $g=f_0^{-1}$. Then, by the formula for the complex dilatation of an inverse (see \cite[p.6]{10}), for $z\in \D$ we have
\[ |\mu_g(z)| = |\mu_{f_0}(f_0^{-1}(z))| = \left | \frac{ \rho(|z|){K_0}-1}{\rho(|z|)K_0+1} \cdot \frac{ \overline{\varphi (f_0^{-1}(z))}}{|\varphi(f_0^{-1}(z)) |} \right | = \frac{\rho(|z|)K_0-1}{\rho(|z|)K_0+1}.\]
We therefore obtain
\[  \frac{1+|\mu_g(z)|}{1-|\mu_g(z)|} \leq K_0\rho(|z|)\]
and conclude that $g\in QC_{\rho}(\D)$ and hence $f_0 \in \mathcal{L}_{\rho}(\D)$. The proof is complete.
\end{proof}

\subsection{Extremal Mappings}

We will show that extremal mappings always exist, but first we need to prove a normal family result which generalizes \cite[Theorem 3.1]{7}.

\begin{lemma}
\label{lem:normal}
Let $\rho$ be allowable, suppose $\mathcal{F} \subset QC_{\rho}(\D)$ and there exists a constant $C>0$ so that
\[ D_f(z) \leq C\rho(|z|)\]
for all $f\in \mathcal{F}$. Then $\mathcal{F}$ is a normal family and relatively compact in $QC_{\rho}(\D)$ viewed as a subset of continuous functions from the disk to itself.
\end{lemma}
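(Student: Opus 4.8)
The plan is to establish normality and relative compactness by combining a uniform modulus-of-continuity estimate on compact subsets of $\D$ with an equicontinuity argument near $\p\D$, then invoking the Arzel\`a--Ascoli theorem. First I would fix an exhaustion of $\D$ by the sets $\D_j = \{|z| < 1 - 1/j\}$. On each $\D_j$, every $f\in\mathcal F$ restricts to a quasiconformal map with maximal dilatation bounded by $K_j := C\,\rho(1-1/j) < \infty$, since $\rho$ is increasing and finite on $[0,1)$. Because all maps in $\mathcal F$ take values in $\D$, the family $\{f|_{\D_j}\}$ is uniformly bounded, and the standard quasiconformal distortion estimates (H\"older continuity on compact subsets, with exponent and constant depending only on $K_j$ and $j$) give a uniform modulus of continuity for $\mathcal F$ on each $\D_{j}$ restricted slightly, say on $\overline{\D_{j-1}}$. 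A diagonal argument over $j$ then extracts, from any sequence in $\mathcal F$, a subsequence converging locally uniformly on $\D$ to some continuous $f:\D\to\overline\D$.

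The heart of the argument, and the step I expect to be the main obstacle, is controlling the behavior near the boundary circle: I must show the extracted limit $f$ actually maps into $\D$ (not collapsing boundary information), extends continuously to $\overline\D$, and that the convergence is uniform up to $\p\D$, so that $f\in QC_\rho(\D)$ and the limit is genuinely in the family's closure in $C(\overline\D,\overline\D)$. This is precisely where condition (iii) of Definition \ref{def:rho} enters. The idea is to run a modulus-of-annulus / length--area estimate: for a boundary point $\xi\in\p\D$ and small $t<R$, one estimates the oscillation of $f$ on $S(\xi,t)\cap\D$ in terms of an integral of the form $\int_t^R \frac{dr}{r\,\rho^*(r)}$, which the allowability hypothesis forces to diverge as $t\to0^+$. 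The divergence of this integral yields, via the usual Teichm\"uller--Wittich--Belinski\u\i{} type argument (as in \cite{6} and \cite{25}, already cited in the paper for the continuous boundary extension), a uniform-in-$\mathcal F$ estimate of the form: the $f$-image of $S(\xi,t)\cap\D$ has diameter tending to $0$ as $t\to0$, with a rate independent of $f$. That uniformity is the key point and is what distinguishes this lemma from the single-map boundary-extension statement; the bound $D_f(z)\le C\rho(|z|)$ with a \emph{common} $C$ makes all the constants in that estimate depend only on $\rho$ and $C$.

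Granting this uniform boundary estimate, I would assemble the proof as follows. Equicontinuity on $\overline\D$: combine the compact-subset H\"older estimate (for pairs of points both in some $\D_j$) with the boundary oscillation estimate (for pairs of points near the same $\xi\in\p\D$), a standard patching to handle a point near the boundary and a point in the interior. Arzel\`a--Ascoli then gives that $\mathcal F$ is precompact in $C(\overline\D,\overline\D)$, hence normal. It remains to check the limit $f$ of a convergent sequence $(f_n)$ lies in $QC_\rho(\D)$: on each $\D_j$, $f$ is a limit of $K_j$-quasiconformal maps, so by the standard closure property of quasiconformal maps under locally uniform convergence either $f|_{\D_j}$ is $K_j$-quasiconformal or degenerate; degeneracy is ruled out because the uniform boundary estimate shows $f$ is injective on $\p\D$ (distinct boundary points have disjoint small-circle neighborhoods mapped to disjoint sets), so $f$ is a homeomorphism of $\D$, locally quasiconformal, and $\mu_f = \lim \mu_{f_n}$ a.e.\ gives $D_f(z)\le C\rho(|z|)$. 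Therefore $f\in QC_\rho(\D)$ and $\mathcal F$ is relatively compact there. A final remark: one should be slightly careful that ``normal family'' here is meant in the sense of $C(\overline\D,\overline\D)$ with the topology of uniform convergence, so that the conclusion is exactly Arzel\`a--Ascoli plus the quasiconformal closure theorem, and the only genuinely new input over \cite[Theorem 3.1]{7} is the use of the allowable $\rho$ to make the boundary estimates uniform across the family.
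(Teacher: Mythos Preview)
Your proposal reads far more into the lemma than the paper does. In the paper, ``normal'' and ``relatively compact'' are meant with respect to \emph{locally uniform} convergence on the open disk, and the proof is three lines: on each $\D_r=\{|z|<r\}$ every $f\in\mathcal F$ is $C\rho(r)$-quasiconformal with values in $\D$, so Montel's theorem for quasiconformal maps gives a convergent subsequence there; a diagonal argument produces a subsequence converging locally uniformly on all of $\D$; the limit is either a constant or a locally quasiconformal map with $D_{f_0}(z)\le C\rho(|z|)$, hence in $QC_\rho(\D)$. Constant limits are \emph{not} excluded here---they are ruled out later, in the proof of Theorem~\ref{thm:extremal}, using the fact that all $f_n$ share the same (nontrivial) boundary values.

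Your additional boundary programme is not only unnecessary, it contains a genuine error. The claim that ``the $f$-image of $S(\xi,t)\cap\D$ has diameter tending to $0$ as $t\to 0$, with a rate independent of $f$'' is false, and so is the consequent claim that degeneracy is ruled out. Take $\mathcal F=\{M_n\}$ with $M_n(z)=(z+1-1/n)/(1+(1-1/n)z)$: these are conformal, so $D_{M_n}\equiv 1\le \rho(|z|)$, yet $M_n\to 1$ locally uniformly on $\D$, and $\{M_n\}$ is not equicontinuous at $-1\in\partial\D$ (indeed $M_n$ sends the arc $\{|z+1|<t\}\cap\partial\D$ onto an arc through $-1$ whose endpoints tend to $1$ as $n\to\infty$). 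The modulus estimate you invoke only tells you that \emph{one} complementary component of the reflected ring is small, not which one; for large $n$ it is the ``outer'' image that becomes small, not the inner one. Thus uniform equicontinuity on $\overline\D$ fails and constant limits cannot be excluded without a normalization. Drop the boundary part and you have exactly the paper's argument.
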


\begin{proof}
Let $r<1$. Then on $\D_r =\{z:|z|<r \}$, every $f\in \mathcal{F}$ is $C\rho(r)$-quasiconformal with image contained in $\Delta$. By Montel's Theorem for quasiconformal mappings (see \cite{Miniowitz}), if $(f_n)_{n=1}^{\infty}$ is any sequence in $\mathcal{F}$, then $(f_n|_{\Delta_r})_{n=1}^{\infty}$ contains a subsequence $(f_{n_k}|_{\Delta_r})_{k=1}^{\infty}$ which converges to either a $C\rho(r)$-quasiconformal map or a constant.

By a standard diagonal sequence argument, we find a subsequence $(f_{n_p})_{p=1}^{\infty}$ with $f_{n_p}$ converging to $f_0$ locally uniformly on $\D$. Then $f_0$ is either a constant or a locally quasiconformal map with $D_{f_0}(z) \leq C\rho(|z|)$. Hence $f_0 \in QC_{\rho}(\Delta)$.
\end{proof}

\begin{proof}[Proof of Theorem \ref{thm:extremal}]
Let $[f_0]\in T_{\rho}(\D)$. Then for each $g\in [f_0]$ we have $1\leq K^{\rho}_{g^{-1}} <\infty$. Let
\[ K = \inf_{g\in [f_0]} K^{\rho}_{g^{-1}},\]
and find $f_n \in [f_0]$ with $K^{\rho}_{f_n^{-1}} \to K$. Without loss of generality, we may assume that $K^{\rho}_{f_n^{-1}}$ is decreasing. Set $C = K^{\rho}_{f_1^{-1}}$. Then for all $n\in \N$, we have
\[ D_{f_n^{-1}}(z) \leq C \rho(|z|).\]
By Lemma \ref{lem:normal}, the family $\{ f_n^{-1} : n\in \N \}$ is normal and hence there exists a subsequence $(f_{n_k}^{-1})_{k=1}^{\infty}$ which converges locally uniformly on $\D$ to a continuous map $h$. For each $k$, $f_{n_k}^{-1}$ agrees with $f_0^{-1}$ on $\partial \D$ and hence $h$ cannot be a constant. We conclude that $h\in QC_{\rho}(\D)$ and $K^{\rho}_h = K$. Setting $f=h^{-1}$ we see that $f$ is extremal in $[f_0]$.
\end{proof}

\subsection{Uniquely Extremal Mappings}

\begin{proof}[Proof of Theorem \ref{thm:ue}]
By Lemma \ref{lem:2}, there exists a locally quasiconformal map $f_0$ with complex dilatation of Teichm\"uller type given by \eqref{eq:lem2}. Consequently, the Teichm\"uller class $[f_0]$ is well-defined and at least contains the representative $f_0$.

Let $f\in [f_0]$ and set $g=f^{-1}\circ f_0$. As we observed in Example \ref{ex:radial}, elements of $QC_{\rho}(\D)$ are not necessarily preserved by taking inverses or compositions. However, $g$ is locally quasiconformal in $\D$, extends to a homeomorphism from $\overline{\D}$ to itself, has generalized partial derivatives on $\D$ and is the identity on $\partial \D$. We can therefore apply the generalized version of the Reich-Strebel Main Inequality proved by Markovi\'c and Mateljevi\'c \cite[Theorem 1]{22} to see that for any $\varphi \in A(\D)$, we have
\begin{equation}
\label{eq:ue1}
\int_{\D} |\varphi| \leq \int_{\D} |\varphi| \frac{ |1+ \mu_g \varphi  |\varphi| |^2}{1-|\mu_g|^2}.
\end{equation}
For convenience, we write $\mu_1(z) = \mu_{f^{-1}}(f_0(z))$, $\mu = \mu_{f_0}$ and $\tau = \overline{(f_0)_z} / (f_0)_z$.
Since
\[ \mu_g=\frac{\mu+\mu_1\tau}{1+\bar{\mu}\mu_1\tau}\]
we obtain
\begin{equation}
\label{align:ue}
 \frac{|1+\mu_g\varphi/|\varphi||^2}{1-|\mu_g|^2} \leq  \frac{|1+\mu\varphi/|\varphi||^2}{(1-|\mu|^2)}
\frac{|1+\mu_1\tau\frac{\varphi}{|\varphi|}(1+\overline{\mu}\frac{\overline{\varphi}}{|\varphi|})/(1+\mu\frac{\varphi}{|\varphi|})|^2}{(1-|\mu_1|^2)}.
\end{equation}
Consequently
\begin{equation}
\label{eq:ue2}
\int_{\D}|\varphi| \leq \int_{\D} |\varphi| \frac{|1+\mu\varphi/|\varphi||^2}{(1-|\mu|^2)}
\frac{|1+\mu_1\tau\frac{\varphi}{|\varphi|}(1+\overline{\mu}\frac{\overline{\varphi}}{|\varphi|})/(1+\mu\frac{\varphi}{|\varphi|})|^2}{(1-|\mu_1|^2)}.
\end{equation}
Setting $\varphi = -\varphi_0$ in \eqref{eq:ue2} and using \eqref{eq:lem2}, we obtain
\begin{align*}
\int_{\D} |\varphi_0| & \leq \int_{\D} |\varphi_0| \frac{1}{K_0\rho( |f_0(z) |)} \frac{|1+\mu_1\tau|^2}{1-|\mu_1|^2} \\
&\leq \frac{1}{K_0} \int_{\D} |\varphi_0| \frac{1}{\rho( |f_0(z)| ) } \frac{1+|\mu_1|}{1-|\mu_1|} \\
&\leq \frac{1}{K_0} \int_{\D} |\varphi_0| \left | \left | \frac{ 1+|\mu_1|}{1-|\mu_1|} \frac{1}{\rho(|f_0(z)|)}\right | \right |_{\infty} \\
&= \frac{K^{\rho}_{f^{-1}} }{K_0} \int_{\D} |\varphi_0|.
\end{align*}
We conclude that $K_0 \leq K^{\rho}_{f^{-1}}$. Since $K^{\rho}_{f_0^{-1}} = K_0$, we see that $f_0$ is extremal.

To show that $f_0$ is uniquely extremal, suppose $f\in [f_0]$ is extremal and keep the same notation as above. Therefore $K^{\rho}_{f^{-1}} = K^{\rho}_{f_0^{-1}}$. We obtain from \eqref{eq:lem2}, \eqref{eq:ue2} and setting $\varphi = -\varphi_0$ that
\[ \int_{\D} |\varphi| \leq \int_{\D} |\varphi| \frac{1}{\rho(|f_0(z)|)K_0} \frac{ |1-\mu_1 \tau \varphi_0/|\varphi_0||^2}{1-|\mu_1|^2}.\]
Since
\[   \frac{ |1-\mu_1 \tau \varphi_0/|\varphi_0||^2}{1-|\mu_1|^2} \leq \frac{ (1+|\mu_{f^{-1}}(f_0(z))|)^2}{1-|\mu_{f^{-1}}(f_0(z))|^2} \leq \rho(|f_0(z)|) K^{\rho}_{f^{-1}},\]
it follows that
\[ \int_{\D} |\varphi| \leq \int_{\D} |\varphi| \frac{1}{\rho(|f_0(z)|)K_0} \rho(|f_0(z)|) K^{\rho}_{f^{-1}} = \int_{\D} |\varphi|.\]
Since there must be equality everywhere, in particular we must have
\[ \mu_1(z) = \mu_{f^{-1}}(f_0(z))= -\frac{1}{\tau} \frac{ K_0\rho(|f_0(z)|)-1}{K_0\rho(|f_0(z)|)+1} \frac{\overline{\varphi_0}}{|\varphi_0|}.\]
However, we also have from the definition of $f_0$ that
\[ \mu_{f_0^{-1}}(f_0(z)) = -\frac{1}{\tau} \mu_{f_0}(z) =
 -\frac{1}{\tau} \frac{ K_0\rho(|f_0(z)|)-1}{K_0\rho(|f_0(z)|)+1} \frac{\overline{\varphi_0}}{|\varphi_0|}.\]
Since $f_0^{-1}$ and $f^{-1}$ have the same complex dilatation, then the same argument as in the proof of Lemma \ref{lem:1} shows that they are related via post-composition by a conformal map. However, since both $f_0^{-1}$ and $f^{-1}$ agree on $\partial \D$, the conformal map must be the identity. We conclude that $f= f_0$ and so $f_0$ is uniquely extremal.
\end{proof}

\subsection{Quasisymmetry functions}

Before we prove Theorem \ref{thm:qs}, we need to recall some results. If $\Gamma$ is a curve family, then let $M(\Gamma)$ denote its modulus. We refer to \cite[Chapter II]{Vuorinen} for the precise definition. In particular, if $E,F \subset \C$ are disjoint continua, then $\Delta(E,F)$ denotes the family of curves starting in $E$ and terminating in $F$ and $M(\Delta(E,F))$ is the corresponding modulus.

For $\xi \in \partial \D$ and $0<r<R$, let $Q(\xi,r,R)$ be the quadrilateral
\[ Q(\xi,r,R) = \{ z: r\leq |z-\xi | \leq R, z\in \D \} \]
with vertices taken in order as the intersections of the circle $\{ |z-\xi| = r \}$ and $\{ |z - \xi| = R \}$ with the unit circle. The modulus $\operatorname{mod} Q(\xi,r,R)$ is then defined as the modulus of the curve family joining the two components of $Q(\xi,r,R) \cap \partial \D$. If $f:\overline{\D}\to \overline{\D}$ is a homeomorphism, then we define $\operatorname{mod} f(Q(\xi,r,R))$ analogously.

\begin{lemma}[Lemma 2.1, \cite{25}]
\label{lem:qs}
Let $\rho$ be allowable and let $f\in QC_{\rho}(\D)$. Then
\[ \int_r^R \frac{dt}{t\rho^*(t)} \leq \operatorname{mod} f( Q(\xi,r,R)),\]
where \[\rho^*(t) = \int_{S(\xi ,t) \cap \D} \rho(|z|) d\theta,\]
and $z=\xi+t e^{i\theta}$.
\end{lemma}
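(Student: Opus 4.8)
The family whose modulus defines $\operatorname{mod} f(Q(\xi,r,R))$ is $f\Gamma$, where $\Gamma$ is the family of curves in $Q=Q(\xi,r,R)$ joining the two boundary arcs $I_\pm = \{z\in\partial\D: r\le|z-\xi|\le R\}$ lying on either side of $\xi$. The plan is to bound $M(f\Gamma)$ from below by a direct length--area argument, testing every admissible metric against the distinguished sub-family of \emph{level arcs} $\beta_t = S(\xi,t)\cap\D$, $r<t<R$; each $\beta_t$ joins $I_-$ to $I_+$ and hence lies in $\Gamma$. Writing $z=\xi+te^{i\theta}$ and $dm = t\,dt\,d\theta$ in polar coordinates centred at $\xi$, I will show that for \emph{every} $\eta$ admissible for $f\Gamma$ one has $\int_{f(Q)}\eta^2\,dm \ge \int_r^R \frac{dt}{t(D_f)^*(t)}$, where $(D_f)^*(t) := \int_{S(\xi,t)\cap\D} D_f\,d\theta$; taking the infimum over $\eta$ gives the lower bound for $\operatorname{mod} f(Q)$, after which $D_f(z)\le C\rho(|z|)$ and hence $(D_f)^*(t)\le C\rho^*(t)$ yield the stated estimate.

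First I fix an admissible $\eta$, so $\int_{f\beta_t}\eta\,ds\ge 1$ for every $t\in(r,R)$. Parametrising the image arc $f\beta_t$ by $\theta$, its arc-length element is $\big|\tfrac{d}{d\theta}f(\xi+te^{i\theta})\big|\,d\theta\le |Df(z)|\,t\,d\theta$, where $|Df|=|f_z|+|f_{\zbar}|$. Using the pointwise identity $|Df|^2 = D_f\,J_f$ (valid at points of differentiability, which occur for a.e.\ $z$ along a.e.\ arc $\beta_t$ by the ACL property), admissibility gives
\[ 1 \le \int_{S(\xi,t)\cap\D} \eta(f(z))\,\sqrt{D_f(z)\,J_f(z)}\; t\,d\theta. \]
Applying Cauchy--Schwarz in the $\theta$-variable and squaring yields, for a.e.\ $t$,
\[ 1 \le \Big( t\!\int_{S(\xi,t)\cap\D}\!\eta(f)^2 J_f\,d\theta\Big)\Big(\int_{S(\xi,t)\cap\D} D_f\, t\,d\theta\Big) = \Big( t\!\int_{S(\xi,t)\cap\D}\!\eta(f)^2 J_f\,d\theta\Big)\,t\,(D_f)^*(t), \]
so that $t\int_{S(\xi,t)\cap\D}\eta(f)^2 J_f\,d\theta \ge \frac{1}{t(D_f)^*(t)}$.

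Integrating this over $t\in(r,R)$ and recognising the left-hand side, via Fubini's theorem and the polar volume element, as $\int_Q \eta(f(z))^2 J_f(z)\,dm(z)$, I then invoke the area formula under $y=f(z)$, $dm(y)=J_f(z)\,dm(z)$, to obtain $\int_{f(Q)}\eta^2\,dm = \int_Q \eta(f)^2 J_f\,dm \ge \int_r^R \frac{dt}{t(D_f)^*(t)}$. As this holds for every admissible $\eta$, I conclude $\operatorname{mod} f(Q)=M(f\Gamma)\ge \int_r^R \frac{dt}{t(D_f)^*(t)}$, and since $(D_f)^*(t)\le C\rho^*(t)$ the asserted inequality follows (with the distortion bound normalised to $C=1$ it is exactly as stated).

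The hard part will be the analytic justification of the two regularity-sensitive steps near $\partial\D$: that $f$ is differentiable with $|Df|^2=D_fJ_f$ along a.e.\ level arc, and that the area formula $\int_Q\eta(f)^2J_f\,dm=\int_{f(Q)}\eta^2\,dm$ persists even though $\overline Q$ meets $\partial\D$, where the distortion may blow up. Both are handled by exhausting $Q$ by the quadrilaterals $Q\cap\{|z|<1-1/n\}$, on each of which $f$ is genuinely quasiconformal (hence $W^{1,2}$, ACL, and satisfies Lusin's condition $(N)$ and the area formula), and letting $n\to\infty$, using $\int_0^1\rho<\infty$ to control $(D_f)^*$ and to guarantee convergence of the integrals; one must also verify $(D_f)^*(t)<\infty$ for a.e.\ $t$, which again follows from $D_f\le C\rho$ and the integrability of $\rho$. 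A clean alternative to the length--area computation is to use the conformal reciprocity $M(f\Gamma)\cdot M(f\Gamma^{*})=1$ for the quadrilateral $f(Q)$, where $\Gamma^{*}$ is the conjugate (radial) family, and to bound $M(f\Gamma^{*})$ from above by testing with the radial metric proportional to $1/(t\rho^*(t))$; this reproduces the same estimate.
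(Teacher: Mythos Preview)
The paper does not supply its own proof of this lemma: it is quoted verbatim as Lemma~2.1 of \cite{25} and used as a black box in the proof of Theorem~\ref{thm:qs}. So there is no in-paper argument to compare against.

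Your argument is the standard length--area proof one would expect for such a statement, and it is sound. Testing every admissible $\eta$ for $f\Gamma$ against the circular level arcs $\beta_t=S(\xi,t)\cap\D$, applying Cauchy--Schwarz in the angular variable with the factorisation $|Df|^2=D_f\,J_f$, integrating in $t$ and invoking the area formula is exactly the right mechanism; your bookkeeping in the Cauchy--Schwarz step is correct and yields $t\int_{\beta_t}\eta(f)^2 J_f\,d\theta\ge 1/\bigl(t(D_f)^*(t)\bigr)$ as you claim. The regularity caveats you flag (ACL on a.e.\ circle, Lusin~(N), and the area formula up to $\partial\D$) are genuine but your proposed remedy---exhaust $Q$ by $Q\cap\{|z|<1-1/n\}$, where $f$ is honestly quasiconformal, and pass to the limit using the integrability of $\rho$---is the standard way to handle them. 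The alternative you sketch via conformal reciprocity $M(f\Gamma)M(f\Gamma^*)=1$ and testing $f\Gamma^*$ with a radial metric is equivalent and equally valid.

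One small point you already noticed: as stated, the lemma omits the constant $C$ from the hypothesis $D_f\le C\rho$. Your proof naturally produces $\operatorname{mod} f(Q)\ge \int_r^R \frac{dt}{t(D_f)^*(t)}\ge \frac{1}{C}\int_r^R\frac{dt}{t\rho^*(t)}$, and the stated form corresponds to $C=1$ (equivalently, to absorbing $C$ into $\rho$). This is harmless for the application in Theorem~\ref{thm:qs}, where only the divergence of the integral as $s\to 0$ matters.
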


Let $\tau:(0,\infty) \to (0,\infty)$ be the Teichm\"uller capacity function (see \cite[p.66]{Vuorinen}). Observe that $\tau$ is decreasing.

\begin{lemma}[Lemma 7.34, \cite{Vuorinen}]
\label{lem:v}
If $\Omega \subset \C$ is an open ring with complementary components $E,F$ and $a,b\in E$, $c,\infty \in F$, then
\[ M(\Delta(E,F)) \geq \tau \left ( \frac{ |a-c|}{|a-b|} \right ).\]
\end{lemma}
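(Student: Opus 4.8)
The plan is to realize $M(\Delta(E,F))$ as the capacity of the ring $\Omega$ and, using conformal invariance together with symmetrization, to compare it from below with the capacity of a genuine Teichm\"uller ring, which by definition of $\tau$ equals $\tau$ evaluated at the appropriate point. The two analytic inputs I would draw on are: (a) the monotonicity of the modulus $M$ under inclusion of the complementary continua (enlarging a plate enlarges the connecting curve family, hence increases its modulus), and (b) the circular symmetrization inequality for the modulus of a ring, both of which are part of the symmetrization theory in \cite[Chapter 7]{Vuorinen}. The entire point of the estimate is to exhibit a concrete Teichm\"uller ring whose capacity the given ring dominates.

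First I would normalize. Since $M(\Delta(E,F))$ is a conformal invariant and $\infty \in F$, I may apply the affine map $z \mapsto z-a$, which preserves the modulus and keeps $\infty$ in the image of $F$; after renaming we have $a=0$, with $0,b \in E$ and $c,\infty \in F$. Write $\beta = |b| = |a-b|$ and $\gamma = |c| = |a-c|$, so the target ratio is $t = \gamma/\beta$. Because $E$ is a continuum containing $0$, its radial projection is an interval $[0,r_E]$ with $r_E = \max_{z \in E}|z| \ge \beta$; likewise $F$ is a continuum containing $\infty$, so its radial projection is $[r_F,\infty]$ with $r_F = \min_{z \in F}|z| \le \gamma$.

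Next I would symmetrize: let $E^*$ be the circular symmetrization of $E$ about the negative real ray and $F^*$ the circular symmetrization of $F$ about the positive real ray. Since circular symmetrization of the complementary continua of a ring onto antipodal rays does not increase the modulus of the connecting curve family, one obtains $M(\Delta(E,F)) \ge M(\Delta(E^*,F^*))$. By the radial-projection description, for every radius $\rho \in [0,r_E]$ the circle $|z|=\rho$ meets $E$, so its symmetrized arc contains the point $-\rho$; hence $[-r_E,0] \subseteq E^*$, and similarly $[r_F,\infty] \subseteq F^*$. Monotonicity of the modulus under inclusion of the plates then gives $M(\Delta(E^*,F^*)) \ge M(\Delta([-r_E,0],[r_F,\infty]))$. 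The affine map $z \mapsto z/r_E$ carries $([-r_E,0],[r_F,\infty])$ to $([-1,0],[r_F/r_E,\infty])$, which is exactly the Teichm\"uller ring defining $\tau(r_F/r_E)$, so $M(\Delta([-r_E,0],[r_F,\infty])) = \tau(r_F/r_E)$. Finally, $r_E \ge \beta$ and $r_F \le \gamma$ give $r_F/r_E \le \gamma/\beta = t$, and since $\tau$ is decreasing, $\tau(r_F/r_E) \ge \tau(t)$. Chaining these inequalities yields $M(\Delta(E,F)) \ge \tau(t)$.

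I expect the delicate point to be the direction of the symmetrization inequality in (b): one must confirm that spreading $E$ and $F$ onto opposite rays moves the plates apart and therefore \emph{lowers} the connecting modulus (equivalently raises the conformal modulus of the ring), so that symmetrization produces the extremal minimal-capacity configuration rather than a larger one. One must also check that the symmetrized arcs stay disjoint, which holds because at each radius the angular measures of $E \cap \{|z|=\rho\}$ and $F \cap \{|z|=\rho\}$ sum to at most $2\pi$; the possible coincidence at radii where the ring fails to separate forms a null set and does not affect the modulus. The remaining ingredients — the affine normalization, the two segment inclusions, and the monotonicity of $\tau$ — are routine once the symmetrization inequality is in hand.
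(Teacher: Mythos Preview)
The paper does not prove this lemma at all: it is quoted verbatim as Lemma~7.34 of \cite{Vuorinen} and used as a black box in the proof of Theorem~\ref{thm:qs}. There is therefore no ``paper's own proof'' to compare your attempt against.

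That said, your outline is the standard symmetrization route to Teichm\"uller's module theorem, and the chain of inequalities is set up correctly: the affine normalization, the segment inclusions $[-r_E,0]\subset E^*$ and $[r_F,\infty]\subset F^*$, the monotonicity step, the identification with $\tau(r_F/r_E)$, and the final use of the monotonicity of $\tau$ are all fine. You have also correctly isolated the one genuinely nontrivial ingredient, namely the inequality $M(\Delta(E,F))\ge M(\Delta(E^*,F^*))$ when $E$ and $F$ are circularly symmetrized onto \emph{opposite} rays. This is true, but it is not the textbook form of circular symmetrization (which symmetrizes the ring domain $\Omega$ itself about a single ray and does not in general produce $E^*,F^*$ as the new complementary components). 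In Vuorinen's treatment the required step is obtained via the spherical cap symmetrization results developed earlier in Chapter~7 (see \cite[\S7, especially 7.2--7.12]{Vuorinen}); an alternative is to derive it from polarization. If you want the argument to stand on its own, that is the place where a precise citation or a short justification is needed.
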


\begin{proof}[Proof of Theorem \ref{thm:qs}]
Since $[f] \in T_{\rho}(\D)$, if $g=f^{-1}$, then $g \in QC_{\rho}(\D)$ and extends to a boundary map that, by abuse of notation, we will also call $g$.

Let $\xi \in \partial \D$, $t>0$ and $w = \xi e^{it/2} \in \partial \D$ be the midpoint of the arc of $\partial \D$ from $\xi$ to $\xi e^{it}$. Let $Q$ be the quadrilateral $Q=Q(w,s,S)$, where $s=|\xi-w| = |e^{it/2}-1|$ and $S = |\xi e^{-it} - w| = |e^{3it/2}-1|$. Then since $g\in QC_{\rho}(\D)$, by Lemma \ref{lem:qs},
\begin{equation}
\label{eq:qs1}
\int_s^S \frac{dr}{r \rho^*(r) } \leq \operatorname{mod} f(Q).
\end{equation}
Let $I(z)  = z/|z|^2$ be inversion in the unit circle. Then $\Omega:= f(Q) \cup I(f(Q))$ is a ring domain. If $\Gamma$ is the curve family separating boundary components of $\Omega$, by symmetry we have
\begin{equation}
\label{eq:qs2}
M(\Gamma) = \frac{ \operatorname{mod}{f(Q)}}{2}.
\end{equation}
Now, if $\Gamma'$ is the curve family connecting the complementary components of $\Omega$, then
\begin{equation}
\label{eq:qs3}
M(\Gamma ') = 1/ M(\Gamma).
\end{equation}
Applying Lemma \ref{lem:v} with $a=g(\xi)$, $b = g(\xi e^{it})$ and $c = g(\xi e^{-it})$, we have
\begin{equation}
\label{eq:qs4}
M(\Gamma ') \geq \tau \left ( \frac{ | g(\xi) - g(\xi e^{-it}) |}{| g(\xi) - g(\xi e^{it})| } \right)
= \tau \left ( \frac{1}{\lambda_g(\xi, t) }\right ).
\end{equation}
Combining \eqref{eq:qs1}, \eqref{eq:qs2}, \eqref{eq:qs3} and \eqref{eq:qs4}, we conclude that
\[ \int_s^S \frac{dr}{r \rho^*(r) } \leq 2\left ( \tau \left ( \frac{1}{\lambda_g(\xi, t) }\right ) \right )^{-1}.\]
Rearranging in terms of $\lambda_g(\xi, t)$ and using the fact that $\tau$ is decreasing, we obtain
\[ \lambda_g(\xi, t)  \leq \left [ \tau^{-1} \left ( \frac{2}{\int_s^S \frac{dr}{r \rho^*(r) }} \right ) \right ] ^{-1}.\]

For the reverse inequality, we apply the same argument as above, except this time we let $w' = \xi e^{-it/2}$ be the midpoint of the arc of $\partial \D$ between $\xi$ and $\xi e^{-it/2}$ and we let $Q'$ be the quadrilateral $Q(w',s,S)$. Using the same notation as above, the argument is the same until we reach \eqref{eq:qs4} and we obtain
\[ M(\Gamma ') \geq \tau \left ( \frac{ | g(\xi) - g(\xi e^{it}) |}{| g(\xi) - g(\xi e^{-it})| } \right)  = \tau \left ( \lambda_g(\xi, t) \right ).\]
This yields
\[ \lambda_g(\xi ,t) \geq  \tau^{-1} \left ( \frac{2}{\int_s^S \frac{dr}{r \rho^*(r) }} \right ).\]
Consequently, we obtain the desired quasisymmetry estimate with
\[ \lambda (t) = \left [ \tau^{-1} \left ( \frac{2}{\int_s^S \frac{dr}{r \rho^*(r) }} \right ) \right ] ^{-1},\]
recalling that $s = |e^{it/2}-1|$ and $S = |e^{3it/2}-1|$.
\end{proof}

\section{Concluding remarks}

\subsection{Boundary maps}

In Theorem \ref{thm:qs}, we showed that if $[f] \in T_{\rho}(\D)$ and $f_0$ is any representative, then $f_0^{-1}$ extends to the boundary and the boundary map has controlled quasisymmetry function $\lambda$ depending on $\rho$ which may, however, blow up. What is not clear is whether given a boundary map with quasisymmetry controlled by $\lambda$, there is a locally quasiconformal extension contained in $QC_{\rho}(\D)$. If so, then we would have an alternate parameterization of $T_{\rho}(\D)$ through boundary maps of controlled quasisymmetry, in analogy with the quasisymmetric parameterization of universal Teichm\"uller space.

There are various extensions available. The Douady-Earle extension \cite{DE} extends a homeomorphism of the circle to a diffeomorphism of the (open) disk and hence this extension will be locally quasiconformal. It would be interesting to know how the distortion of the extension is controlled by $\lambda$.

Another extension is obtained through the Beurling-Ahlfors extension of homeomorphisms of the real line to homeomorphisms of the upper half-plane. In \cite{6}, it is shown that control of the quasisymmetry function of the boundary map leads to control of the distortion of the Beurling-Ahlfors extension, but again we don't know whether we can obtain $\rho$ from $\lambda$.

\subsection{Pseudo-metrics and metrics}

It is well-known that universal Teichm\"uller space carries the Teichm\"uller metric, and this has been well studied. For $T_{\rho}(\D)$, it is not clear how to make it into a metric space. One immediate barrier is that different classes can have extremal representatives $f_1,f_2$ which both have $K^{\rho}_{f_i^{-1}} = 1$ for $i=1,2$. Moreover, the fact that $QC_{\rho}(\D)$ need not be closed under compositions and inverses makes a direct analogy of the Teichm\"uller metric impossible.

On the other hand, it is easy to turn $T_{\rho}(\D)$ into a pseudo-metric space by considering maps $F:T_{\rho}(\D) \to (X,d_X)$, where $X$ is a metric space with distance function $d_X$. We then define $d_{T,X}$ on $T_{\rho}(\D)$ via
\[ d_{T,X}( [f] , [g] ) = d_X( F([f]) , F([g]) ).\]
For example $F([f]) = \inf_{f_0\in [f]} K^{\rho}_{f_0}$ mapping $T_{\rho}(\D)$ into $\R^+$ with the usual Euclidean metric yields a pseudo-metric. We can therefore ask how to construct a metric on $T_{\rho}(\D)$ or, slightly nebulously, how to construct as interesting a pseudo-metric as possible.

\subsection{Riemann surfaces}

Our construction of generalized universal Teichm\"uller spaces leads to an obvious generalization to generalized Teichm\"uller spaces of hyperbolic Riemann surfaces, that is, those surfaces covered by the unit disk. It seems plausible that a study of such objects could yield information about the interplay between length-spectrum and quasiconformal Teichm\"uller spaces. In particular, does length spectrum Teichm\"uller space sit inside a generalized Teichm\"uller space for every infinite-type Riemann surface?


\begin{thebibliography}{1}
\bibitem{1} L. V. Ahlfors,
{\it Lectures on quasiconformal mappings}, Van Nostrand, {New York}, 2. C. 1966.

\bibitem{3}    K. Astala, T. Iwaniec  and G. Martin,
{\it  Elliptic Partial Differential Equations and Quasiconformal Mappings in the Plane},
Princeton University Press, 2009.

\bibitem{4}   M. A. Brakalova, Boundary extension of $\mu$-homeomorphisms, {\it Complex
and Harmonic Analysis}, DEStech.  Publ., Inc., Lancaster, PA, 231-247 (2007).

\bibitem{5} M. Brakalova and J. Jenkins, On solutions of the Beltrami equation,
{\it J. Anal. Math.}, {\bf 76}, (1998) 67-92.

\bibitem{6} J. X. Chen, Z. G. Chen and C. Q. He, Boundary correspondence under
$\mu(z)$-homeomorphisms, {\it Michigan Math. J.}, {\bf 43} (1996), 211-220.

\bibitem{7} Z. G. Chen, Estimates on $\mu(z)$-homeomorphisms of the unit disk, {\it Israel J. Math.}, {\bf 122} (2001), 347--358.

\bibitem{8} Z. G. Chen,  $\mu(z)$-homeomorphisms in the plane,
{\it Michigan Math. J.}, {\bf 51} (2003), 547-556.

\bibitem{9} G. David, Solution de l'equation de Beltrami avec
$\|\mu\|_\infty=1,$ {\it Ann. Acad. Sci. Fenn. Ser. A I Math.}, {\bf 13} (1988),
25-69

\bibitem{DE}
A. Douady, C. J. Earle,
Conformally natural extension of homeomorphisms of the circle,
{\it Acta. Math.}, {\bf 157} (1986), no. 1-2, 23-48.

\bibitem{10} A. Fletcher, V. Markovic, {\it Quasiconformal Maps and Teichm\"uller Theory}, Oxford
University Press, 2007.


\bibitem{11} F. P. Gardiner, N. Lakic, {\it Quasiconformal
Teichm\"{u}ller Theory}, Math.  Surveys and Monographys, Vol. 76 AMS,
Providence, RI, 2000.

\bibitem{12} F. W. Gehring and T. Iwaniec, The limit of mappings with finite
distortion, {\it Ann. Acad. Sci. Fenn. Ser. A I Math.}, {\bf 24} (1999),
253-264.

\bibitem{13} V. Gutlyanskii, Ken-ichi Sakan and T. Sugawa,
On $\mu$-conformal homeomorphisms and boundary correspondence,
{\it Complex Var. Elliptic Equ.}, {\bf 58} (2013), 947-962.


\bibitem{14} V. Gutlyanskii, O. Martio, T. Sugawa and M. Vuorinen, On
the degenerate Beltrami equation, {\it Trans. Amer. Soc.}, {\bf 357} (2005),
875-900.

\bibitem{HK}
S. Hencl, P. Koskela, {\it Lectures on mappings of finite distortion}, Lecture Notes in Mathematics, Springer, 2014.

\bibitem{16} T. Iwaniec and G. Martin, {\it Geometric function theory and non-linear
analysis}, Oxford Univ. Press, New York, 2001.

\bibitem{17} O. Lehto, Homeomorphism with a given dilatation, {\it Proceedings of the
15th Scandinavian congress} (Oslo, 1968), Lecture Notes in Math.,
118, pp. 58-73, Springer-Verlag, Berlin, 1970.

\bibitem{18} O. Lehto, {\it Univalent functions and Teichm\"uller spaces}, Springer-Verlag, Berlin, 1987.

\bibitem{19} O. Lehto and K. Virtanen, {\it Quasiconformal mappings in the plane},
Springer-Verlag, Berlin, 1973.

\bibitem{21}  Z. Li, Locally Quasiconformal mappings and the
Dirichlet problem of degenerate elliptic equations, {\it Comp.
Var. Th. Appl.}, {\bf 23} (1993), 231-247.

\bibitem{22} V. Markovi$\acute{\text{c}}$ and M. Matijevi$\acute{\text{c}}$,
A new version of the Main Inequality and the uniqueness of harmonic maps.
{\it J. Anal. Math.}, {\bf 79} (1999), 315-334.

\bibitem{Miniowitz}
R. Miniowitz,
Normal families of quasimeromorphic mappings, {\it Proc. Amer. Math. Soc.}, {\bf 84} (1982), 35-43.

\bibitem{PZ}
C.L. Petersen, S. Zakeri,
On the Julia set of a typical quadratic polynomial with a Siegel disk,
{\it Ann. of Math. (2)}, {\bf 159} (2004), no. 1, 1-52.

\bibitem{Shiga}
H. Shiga,
On a distance defined by the length spectrum of Teichm\"uller space,
{\it Ann. Acad. Sci. Fenn. Math.}, {\bf 28} (2003), no. 2, 315-326.


\bibitem{24} P. Tukia, Compactness properties of $\mu$-homeomorphisms,
{\it Ann. Acad. Sci. Fenn. Ser. A I Math.}, {\bf 16} (1991), 47-69.

\bibitem{Vuorinen} M. Vuorinen, {\it Conformal Geometry and Quasiregular Mappings}, Springer, 1988.

\bibitem{Zakeri} S. Zakeri, On boundary homeomorphisms of trans-quasiconformal maps of the disk, {\it Ann. Acad. Sci. Fenn. Math.}, {\bf 33} (2008), no. 1, 241-260.

\bibitem{25}  Zemin Zhou, On locally quasiconformal mappings, {\it Acta Math. Sin. (Engl. Ser.)}, {\bf 29} (2013), no. 8, 1543-1554.

\end{thebibliography}
\end{document}